\newtheorem{theorem}{Theorem}[section]
\newtheorem{lemma}[theorem]{Lemma}
\newtheorem{corollary}[theorem]{Corollary}
\newtheorem{prop}[theorem]{Proposition}
\newenvironment{proof}{\noindent{\bf Proof}~}%
{\hfill$\qed$\medskip}
\def\qed{\Box}
\begin{document}

\parskip 2pt

\title{Stable and meta-stable contract networks%
 \thanks{Supported by grant RFFI 20-010-00569-A}}

\author{Vladimir I. Danilov\thanks{Central Institute of Economics and
Mathematics of the RAS, Nahimovskii prospect, 47, 117418 Moscow; email: vdanilov43@mail.ru.}
 \and
Alexander V. Karzanov\thanks{Central Institute of Economics and
Mathematics of the RAS, Nahimovskii prospect, 47, 117418 Moscow; email: akarzanov7@gmail.com.}}

\date{}
\maketitle

\begin{abstract}
We consider a hypergraph $(I,C)$, with possible multiple (hyper)edges and loops, in which the vertices $i\in I$ are interpreted as \emph{agents}, and the edges $c\in C$ as \emph{contracts} that can be concluded between agents. The preferences of each agent $i$ concerning the contracts where $i$ takes part are given by a \emph{choice function} $f_i$ possessing the so-called \emph{path independent} property. In this general setup we introduce the notion of stable contract network.

The paper contains two main results. The first one is that a general stable contract problem for $(I,C,f)$ is reduced to a special one in which preferences of the agents are given by \emph{weak orders}, or, equivalently, utility functions. However, stable contract systems may not exist. Trying to overcome this trouble, we introduce a weaker notion of \emph{meta-stability} for systems of contracts. Our second result is that meta-stable systems always exist. A proof of this result relies on an appealing theorem on the existence of the so-called compromise function.

\emph{Keywords:} Plott choice functions, Aizerman-Malishevski theorem, stable marriage, hypergraph, roommate problem, Scarf lemma

\emph{JEL classification:} C71, C78, D74
 \end{abstract}

      \section{Introduction}

In their lives, people often have to make joint actions and organize groups in order to achieve some goals. We call such cooperations \emph{contracts}. Examples are:  house exchange, purchase or sale, marriage, loan or deposit of money, hiring, co-financing society, cartel, military or economic union of countries. Contracts can involve not only individuals, but also larger entities; for convenience, we call the parties of a contract \emph{agents} or \emph{participants}. Some contracts include only two agents (we call such contracts \emph{binary}), but many other ones can  include a larger number of agents. Note also that agents are allowed to enter several different contracts at once.

Contracts bring some benefits to the participants, but also may require them to spend money, time or other expenses. It is important for participants to know more precisely what they can count on. For this purpose, the agreements should be as detailed and formalized as possible, though not everything can be taken into account. For example, a marriage contract may include how much time the spouses can spend in the family and how much `outside', how to share household efforts, how many children to have, etc. The more all this is worked out in detail, the better the participants represent the pros and cons and can compare different contracts. We further assume that each contract can be unambiguously evaluated by each of its participants.

The information about the `preference' of one or another contract primarily affects the choice of  contracts that will actually be concluded (signed). Here we are based on the premise of voluntariness of contracts. No one can force an agent to sign a contract if the agent does not like to do this. On the other hand, no one can forbid a group to sign a contract if its all members agree. These two requirements lead to the concept of a \emph{stable} system (or network) of contracts, which will be the main topic of our work. The concept of stability originally appeared in the work of Gale and Shapley~\cite{GS} and gradually has become the subject of extensive researches both theoretical and practical. Gale and Shapley showed that in case of marriages, a stable system always exists. They assumed the marriages to be  monogamous and bipartite (heterosexual). Without these conditions, stable systems may not exist. An example is the famous problem of  stable `roommates', or `division into pairs'. Even to a greater extent, this trouble concerns non-binary contracts.

One remark is needed to be mentioned here. When agents are allowed to conclude many contracts, they should be able to compare not only individual contracts, but also arbitrary subsets of contracts. Therefore, it is not enough to attribute a value to each contract only. Instead, we prefer to use the so-called \emph{choice functions} (CFs, for brevity), which tell us what groups of contracts from the available list are `the best' ones to be chosen for signing. This approach was initiated by Kelso and Crawford~\cite{KC} who revealed importance of the condition of "substitutability" for the existence of stable systems. Subsequently, a number of researches have shown that this condition is applicable to all problems with `bilateral' contracts (see e.g.~\cite{F,HM}). We show that in a more general setting, this condition on agents' CFs is also adequate.

The paper contains two main results. The first one is that a general problem on stable contract systems is reduced to a more special situation in which preferences of agents are described by use of \emph{weak orders}, or utility functions. Roughly speaking, agents conclude contracts having the maximum utility and ignore the rest. However, even in such situations the stability need not exist. To overcome this trouble, we propose a weaker notion of \emph{meta-stable} contract system.

{Roughly speaking, the notion of meta-stability differs from that of stability by withdrawing one of the two defining axioms for the latter (namely, the one of individual rationality); in the end of Section~5 we briefly discuss justifications for the release of this sort. An important fact (established in Proposition~4.1) is that every stable contract system (if exists) turns out to be meta-stable as well. Our second main result in this paper is that meta-stable systems always exist. These appealing properties give a good theoretical ground for studying meta-stable systems, which, to our belief, would find interesting economic applications.}


\section{Basic definitions and settings}

A general setup can be stated as follows. There are a finite set of \emph{agents} $I$ and a finite set $C$ of \emph{contracts} which are available for the agents to conclude. Each agent can conclude several contracts. Each contract $c\in C$ is shared by a nonempty set of participants $P(c)\subseteq I$. If $P(c)$ is a singleton $\{i\}$, the contract $c$ is called \emph{autarkic}; this can be thought of not as a contract in reality, but rather as an `activity' available to $i$ alone. Thus, the object that we deal with can be described as a hypergraph, with possible parallel  hyperedges, and when needed, we may use the language of (hyper)graphs, interpreting the vertices as agents and  (hyper)edges as contracts. Equivalently, the input can be encoded as a bipartite graph with the parties (color classes) $I$ and $C$.

For $S\subseteq C$, let $S(i)$ denote the set of contracts $s\in S$ such that $i$ is a  participant of $s$.

As mentioned above, the quality of contracts can be expressed in terms of  'utility', which brings benefits to their participants. Guided by these utilities, agents conclude some contracts and refuse other ones. So an evaluation of contracts via utilities is an important part of the problem. The simplest way to define a utility is to express it numerically, by assigning a (real or integer) number $u_i(c)$ to each contract $c$ of $C(i)$. However, this is not the most general way to establish `preferences' of agents. Since an agent can conclude several contracts, it is often important for him/her to know the utilities not only for individual contracts, but also for their collections.

A rather powerful method to describe  preferences of contracts, yielding sufficient flexibility and generality, attracts choice functions. A {\em choice function} $f$ on an (abstract) set of `alternatives' $X$ selects a `good' subset $f(A)\subseteq A$ for any set (`menu') $A\subseteq X$. In our case, the choice of agent $i$ is taken within the set  of available contracts $C(i)$.\footnote{ Here we default assume that an agent does not care of what contracts are concluded without his participation. For example, in the situation of hiring workers by firms, one assumes that it is important to the worker in which firms he will work, but it does not matter to him who else works in these firms. On the other hand, it is important to a firm who will work in it, but it does not matter where else the employee works. In some situations, such an assumption looks not realistic, but it can be accepted as a first approximation.}

It light of this, the second important ingredient of the problem consists of an appropriate set of choice functions $f_i$ on the sets $C(i)$ for agents $i\in I$. We refer to such a set of choice functions representing preferences of agents as an \emph{equipment} of the hypergraph $G=(I,C)$. Using this, we now can talk about the stability of a contract system (or network) $S\subseteq C$. Roughly speaking, this is a system $S$ such that nobody wants to change it, either by renouncing some contract, or by concluding a new contract, perhaps by breaking some existing ones in $S$. At the same time, it is assumed that contracts are concluded voluntarily. This means that any agent can refuse to conclude any contract, and that any contract can be concluded only with the consent of its all participants. The formal definition is as follows.\medskip

\noindent\textbf{Definition.} A network $S$ is called {\em stable} if the following two conditions hold:

 \begin{itemize}
\item[\textbf{S0.}]    $f_i(S(i))=S(i)$ for any $i\in I$;
\item[\textbf{S*.}] If a contract $b$ does not belong to $S$, then $b\notin f_i(S(i)\cup b)$ for some $i\in P(b)$.
 \end{itemize}
 (Hereinafter, for a set $S$ and a singleton $s$, we may write $S\cup s$ for $S\cup\{s\}$.)
 \smallskip

The first condition expresses the possibility of renouncing any contract. The second one says that if a contract $b$ is interesting to its all participants, then it should be concluded. And the absence of $b$ in $S$ indicates incompleteness of the process of building a system of contracts. Sometimes one says that such a contract $b$ \emph{blocks} the system $S$.

The main issue that we will be dealing with concerns the existence of stable networks. The answer depends on both the structure (`geometry') of the original network $C$ and (to a greater extent) the `preferences' of agents. For example, if $(I,C)$ is a bipartite graph, then a stable network $S\subseteq C$ exists under rather weak conditions on preferences. And if all agents behave indifferently, then already the original network $C$ is stable. On the other hand, even in case of binary contracts (in a non-bipartite graph) with the best individual preferences, the stability may not take place.

We will focus on preferences without imposing preliminary restrictions on the original network $C$. As is mentioned earlier, preferences are given via choice functions. Note that a variety of possible CFs is large, but a majority of them do not correspond to an intuitive concept of  'reasonable choice'. To illustrate this situation, we demonstrate two examples of CFs that are regarded as `rational'.\medskip

\noindent\textbf{Example 1.} Let $\le$ be a preorder on a set $X$ (that is, a reflexive and transitive binary relation, admitting `equal' elements). And let $f(A)$ consist of all maximal elements in $A\subseteq X$ relative to  $\le$ (one often $f$ is denoted as $\max_\le$). A CF of this kind is considered as rational, since a rational reason for including one or another alternative in the choice is clearly seen,  to be the lack of a better alternative. Note that this choice is nonempty (when a menu $A$ is such).
\smallskip

Two special cases of this construction deserve to be mentioned. The first one is when $\le$ forms a {\em weak order} (a preorder in which any two elements are comparable). The second one is when $\le$ is a \emph{linear} (total strict) order, or ranking. In the latter case, the choice $f(A)$ consists of a single element (if $A\ne\emptyset$); in this case we are talking about \emph{linear preferences}, or linear equipment.\medskip

\noindent\textbf{Example 2.} Let $\le$ be a linear order, but the choice includes $b$ best items from menu. The number $b$ is prescribed and can be understood as a quota. Such a choice rule is viewed as rational as well; it has been considered in many works on stable $b$-matchings. See e.g.~\cite{CF,F,F-10,IS} where generalizations (of type `many-to-many') of stable marriages and roommates are studied.\medskip

Both examples are special cases of the so-called \emph{path-independent} choice functions. Such a function $f$ satisfies the following functional equation:
$$
                                f(A\cup B)=f(f(A)\cup B).
                     $$

This equation says that the answer does not change under replacing a part of the menu by its best elements. Therefore, the computation of $f(\cdot)$ can be performed step by step, by choosing every time an arbitrary subset in the current domain, and the answer does not depend on the choice (or `path'). This condition was introduced by Plott~\cite{P}, and we call such CFs \emph{Plottian} or \emph{Plott functions}. It turned out that this condition is quite suitable for studying stability, and  later on we will assume throughout that all CFs in question are just Plottian. Such CFs  have been investigated extensively in the literature (especially Aizerman and Malishevski's paper~\cite{AM} should be distinguished); the facts about Plottian CFs that are needed for us are contained in Appendix~C.

Note that when the equipment is given by Plott functions, any stable network (if exists) is Pareto optimal. However, one can address the question: how to understand the optimality if preferences are given by CFs? We can do this in the following way. Let $f$ be a Plott function on a set $X$. One can associate with $f$ the following hyper-relation $\preceq =\preceq _f$ (a relation on $2^X$ introduced by Blair \cite{Bl}), which is given by the expression:
$$
                               A \preceq  B\quad \mbox{if}\quad f(A\cup B)\subseteq B.
                             $$
This hyper-relation is transitive and reflexive (when $f$ is Plottian). If CF $f$ is given by a weak order $\le$, then $A\preceq B$ implies $\max(A)\le \max(B)$. The following fact is of use.

\begin{prop} \label{prop0}
Suppose that all $f_i$'s are Plott functions  and that $\preceq _i$ are the hyper-relations as above. Let $S$ be a stable system, and let $T$ be a contract system satisfying condition \emph{\textbf{S0}}. If  $S(i)\preceq _i T(i)$ holds for each $i$, then $S=T$.
 \end{prop}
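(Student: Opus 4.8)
The plan is to first strengthen the hypothesis into an exact identity per agent, then use the blocking axiom \textbf{S*} to force $T\subseteq S$, and finally read off $S=T$ from condition \textbf{S0}. Throughout I would use the two structural properties of Plott functions recorded in Appendix~C: \emph{heritage} (if $A\subseteq B$ then $f(B)\cap A\subseteq f(A)$) and \emph{outcast} (if $f(A)\subseteq B\subseteq A$ then $f(B)=f(A)$).

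\emph{Step 1: from $\preceq_i$ to an equality.} By the definition of $\preceq_i$, the hypothesis $S(i)\preceq_i T(i)$ reads $f_i(S(i)\cup T(i))\subseteq T(i)$. Since also $T(i)\subseteq S(i)\cup T(i)$, the outcast property applied with $A:=S(i)\cup T(i)$ and $B:=T(i)$ gives $f_i(T(i))=f_i(S(i)\cup T(i))$; and \textbf{S0} for $T$ says $f_i(T(i))=T(i)$. Hence $f_i(S(i)\cup T(i))=T(i)$ for every $i\in I$; I abbreviate this relation to $(\ast)$.

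\emph{Step 2: $T\subseteq S$.} Suppose not, and pick $b\in T\setminus S$. Since $b\notin S$, axiom \textbf{S*} yields a participant $i\in P(b)$ with $b\notin f_i(S(i)\cup b)$. On the other hand $b\in T(i)$, so $S(i)\cup b\subseteq S(i)\cup T(i)$, and heritage (with $A:=S(i)\cup b$, $B:=S(i)\cup T(i)$) gives $f_i(S(i)\cup T(i))\cap(S(i)\cup b)\subseteq f_i(S(i)\cup b)$. By $(\ast)$ the left-hand side equals $T(i)\cap(S(i)\cup b)$, which contains $b$. Thus $b\in f_i(S(i)\cup b)$, contradicting the choice of $i$. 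Hence $T\subseteq S$. Then from $T\subseteq S$ we get $T(i)\subseteq S(i)$, so $S(i)\cup T(i)=S(i)$; substituting into $(\ast)$ and using \textbf{S0} for $S$ (that $f_i(S(i))=S(i)$) gives $T(i)=f_i(S(i))=S(i)$ for every $i$. Since every contract has at least one participant, $S=\bigcup_{i\in I}S(i)=\bigcup_{i\in I}T(i)=T$.

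The argument is short, so I do not expect a serious obstacle; the care-demanding points are (i) invoking the correct Appendix~C properties of path-independent choice functions and applying heritage in the right direction, and (ii) noticing that \textbf{S0} for $T$ is precisely what upgrades the inclusion in Step~1 to an equality. Dropping \textbf{S*} would kill the statement (e.g. $S=\emptyset$ satisfies \textbf{S0} and $\emptyset\preceq_i T(i)$ for every $T$ obeying \textbf{S0}), which confirms that the blocking axiom is exactly the ingredient that makes Step~2 work.
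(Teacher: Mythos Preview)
Your proof is correct and follows essentially the same route as the paper: both first upgrade the hypothesis to the identity $f_i(S(i)\cup T(i))=T(i)$, then use Heredity to show that any $b\in T\setminus S$ would satisfy $b\in f_i(S(i)\cup b)$ for every participant $i$, contradicting \textbf{S*}. Your write-up is in fact slightly tidier than the paper's (you avoid the detour through the case ``$T(i)\preceq_i S(i)$ for all $i$'' and make explicit the final use of \textbf{S0} for $S$), but the substance is the same.
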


\noindent In other words, if the system $T$ is not worse than $S$ for all agents, then $S$ and $T$ coincide (in particular, $T$ is not better than $S$).
\medskip

 \begin{proof}
The condition $S(i)\preceq _i T(i)$ means that $f_i(S(i)\cup      T(i))\subseteq T(i)$. Since
  $$
f_i(S(i)\cup T(i))=f_i(S(i)\cup T(i)\cup T(i))=f_i(f_i(S(i)\cup T(i))\cup T(i))=f_i(T(i))
   $$
(in view of $f_i(S(i)\cup T(i))\subseteq T(i)$), we have $f_i(S(i)\cup T(i))=T(i)$.

If the opposite inequality $T(i)\preceq _i S(i)$ holds for all $i$, then $f_i(S(i)\cup T(i))=f_i(S(i))=S(i)$, then $S(i)=T(i)$, and we are done. So we may assume that for some agent (denote it as 0)  the set  $T(0)=f_0(S(0)\cup T(0))$ is not contained in $S(0)$. Then there is a contract $t$ belonging to $T(0)-S(0)$. Let $j$ be an arbitrary participant of the contract $t$; so $t\in T(j)=f_j(S(j)\cup T(j))$. Since $t$ is chosen (by the CF $f_j$) in the larger set $S(j)\cup T(j)$, this $t$ is also chosen in the smaller set $S(j)\cup t$ (by Heredity property, see Appendix~C). So we have  $t\in  f_j(S(j)\cup t)$, which is true for any element $j$ of $P(t)$. Since $t$ does not belong to $S$, we obtain a contradiction to the stability condition {\bf S*}.
 \end{proof}

The first main result of this paper is that the problem with general Plott functions can be reduced to a problem in which the preferences of agents are given by weak orders, as we explain in the next section.
\medskip

\textbf{Remark.} {In the above definition of a stable contract network $S$, condition {\bf S*} requires that no (individual) contract $b$ in $C-S$ can block $S$. This matches usual non-blocking conditions in the classical works on stability. In some last papers, a somewhat stronger condition of stability were  proposed, which forbids any blocking contract sets; see e.g.~\cite{RY, HK}. However, in the case of path-independent choice functions, both conditions become equivalent; see~\cite{HK}.}


\section{The reduction theorem}

\begin{theorem} \label{theorem1}
Suppose that the equipment of a hypergraph $(I,C)$ is given by Plott functions. Then there exist a hypergraph $(I',C')$ equipped with weak orders and a mapping (hypergraph homomorphism) $\pi : (I',C')\to (I,C)$ such that:

a)  for any stable system $S'$ in $C'$, its image $\pi(S')$ is stable in $C$;

b) for any stable system $S\subseteq C$, there is a stable system $S'$ in $C'$ such that $\pi(S')=S$.
 \end{theorem}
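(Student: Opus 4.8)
The plan is to build $(I',C')$ by ``splitting'' each agent and each contract according to the Aizerman--Malishevski representation of the Plott functions. Recall (Appendix~C) that every Plott function $f_i$ on $C(i)$ can be written as $f_i(A)=\bigcup_{k} \max_{\le_i^k}(A)$ for a finite family of linear (or weak) orders $\le_i^1,\dots,\le_i^{m_i}$ on $C(i)$; intuitively each index $k$ is a ``personality'' or ``avatar'' of agent $i$. I would let $I'$ consist of all pairs $(i,k)$ with $i\in I$ and $k\le m_i$, equipped with the weak order $\le_i^k$; the hypergraph homomorphism $\pi$ sends $(i,k)\mapsto i$. The contracts have to be split as well, because a single contract $c$ with $P(c)=\{i,j,\dots\}$ must now be assigned a definite avatar of each of its participants: so $C'$ consists of tuples $(c;k_i,k_j,\dots)$ recording, for every $i\in P(c)$, which avatar $k_i\le m_i$ of $i$ signs it, and $P'$ of such a copy is $\{(i,k_i):i\in P(c)\}$; again $\pi(c;k_i,\dots)=c$. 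This is a genuine hypergraph homomorphism in the sense that $\pi$ maps vertices to vertices, edges to edges, and respects incidence.

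Next I would pin down the correspondence between systems. Given $S\subseteq C$, a natural candidate lift is obtained by sending each $c\in S$ to the copy in which participant $i$ uses an avatar $k_i$ that actually selects $c$ inside $S(i)$ — i.e.\ $c\in\max_{\le_i^{k_i}}(S(i))$ — which exists precisely when $c\in f_i(S(i))$, hence for all $i\in P(c)$ as soon as $S$ satisfies \textbf{S0}. Conversely, given $S'\subseteq C'$ with the property that the copies appearing in $S'$ have pairwise disjoint underlying contracts (so that $\pi|_{S'}$ is injective), set $S=\pi(S')$. The two verifications to carry out are: (a) if $S'$ is stable in $(I',C')$ then $S=\pi(S')$ is stable in $(I,C)$; (b) if $S$ is stable in $(I,C)$ then the lift $S'$ just described is stable in $(I',C')$. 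For (a): condition \textbf{S0} for $S$ follows because $f_i(S(i))=\bigcup_{k:\,(i,k)\text{ used}}\max_{\le_i^k}(S(i)\cap(\text{contracts of avatar }k))$, and \textbf{S0} for each avatar $(i,k)$ says the avatar keeps all of its contracts; one has to check that no avatar $(i,k)$ of $i$ that does \emph{not} occur in $S'$ can ``steal'' a contract of $S$ — this is exactly where \textbf{S*} in $(I',C')$ (applied to the blocking copy of that contract routed through avatar $k$) is used. The blocking condition \textbf{S*} for $S$ is the contrapositive: an unsigned $b\in C$ that were chosen by every participant's current avatar would, via the appropriate copy $b'$, block $S'$. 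For (b): \textbf{S0} for $S'$ is immediate from the construction of the lift; for \textbf{S*} in $(I',C')$, take any copy $b'\notin S'$; if its underlying contract $b$ lies in $C-S$, stability \textbf{S*} of $S$ gives a participant $i$ with $b\notin f_i(S(i)\cup b)$, i.e.\ $b\notin\max_{\le_i^k}(S(i)\cup b)$ for \emph{every} $k$, in particular for the avatar named in $b'$; if instead $b\in S$ but $b'$ is not the chosen copy, then for the participant $(i,k_i)$ named in $b'$ one has $b\notin\max_{\le_i^{k_i}}(S'(i,k_i)\cup b)$ because a different, already-present copy of $b$ dominates it. Finally, \textbf{S0} of the lift and Heredity give that $\pi(S')=S$ and the lift is a valid $S'$ for part~(b).

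The main obstacle I anticipate is bookkeeping around \emph{which} avatar signs a shared contract and making sure the two halves of the argument use \emph{consistent} routing: in part~(a) one must argue that the disjointness/injectivity of $\pi$ on a stable $S'$ is automatic (two distinct copies of the same $c$ can never coexist in a stable $S'$, since one would block against the other through some participant), and in part~(b) one must check the lift is well-defined even when $f_i(S(i))$ is selected by several avatars — any consistent choice of $k_i$ per contract works, but one must make a choice and stick with it. A secondary subtlety is that the orders $\le_i^k$ produced by Aizerman--Malishevski may be only weak orders (ties allowed), so ``$c\in\max_{\le}(A)$'' must be read with non-strict maximality throughout; this is harmless but needs to be stated. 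Once the routing dictionary is fixed, conditions \textbf{S0} and \textbf{S*} transfer almost mechanically via Heredity and the union formula for $f_i$, so I would present the proof as: (1) the construction of $(I',C')$ and $\pi$; (2) the lift map $S\mapsto S'$ and the descent map $S'\mapsto\pi(S')$, with the disjointness lemma; (3) the \textbf{S0}/\textbf{S*} verifications for each direction.
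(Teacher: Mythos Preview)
Your overall architecture---split every agent into avatars indexed by the linear orders in an Aizerman--Malishevski decomposition, duplicate contracts along all avatar choices, and transfer stability back and forth---is the same as the paper's. The paper performs the splitting one agent at a time and iterates (Propositions~\ref{prop1} and~\ref{prop2}), whereas you do it all at once; that difference is cosmetic. However, two points in your sketch are wrong, and one of them is a genuine gap.

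\smallskip
\textbf{The injectivity claim is false (and unneeded).} Two copies $(c;\vec k)$ and $(c;\vec k')$ of the same $c$ \emph{can} coexist in a stable $S'$: whenever $k_i\neq k_i'$ the two copies live at different avatars of $i$ and do not interact; whenever $k_j=k_j'$ they are tied under the pulled-back weak order and both sit in $\max$. Neither blocks the other. Fortunately you never need injectivity: just set $S=\pi(S')$ as a set.

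\smallskip
\textbf{The lift in part (b) is the real gap.} Sending each $c\in S$ to a \emph{single} copy $(c;\vec k)$ can leave many avatars with $S'(i,k)=\emptyset$, and an empty avatar accepts anything. Concretely: take $I=\{1,2\}$, contracts $a_1,a_2$ (autarkic) and $b,c$ (binary). Let the decomposition of $f_1$ be $\le_1^1:\,a_1>b>c$ and $\le_1^2:\,c>a_1>b$; let that of $f_2$ be $\le_2^1:\,a_2>c>b$ and $\le_2^2:\,b>a_2>c$. Then $S=\{a_1,a_2\}$ is stable in $(I,C)$ (agent~1 rejects $b$, agent~2 rejects $c$). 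Your lift assigns $a_1$ to one avatar of~1 and $a_2$ to one avatar of~2, say the first in each case, so $S'(1,2)=S'(2,2)=\emptyset$. Now the copy $(b;2,2)$ is not in $S'$, yet both its participants $(1,2)$ and $(2,2)$ have empty menus and therefore choose it: $(b;2,2)$ blocks $S'$. So your $S'$ is not stable. The fix is exactly what the paper does (compare the definition $\widetilde S(1):=f_1(S(0))$, $\widetilde S(2):=f_2(S(0))$): put $(c;\vec k)$ into $S'$ for \emph{every} tuple $\vec k$ with $c\in\max_{\le_i^{k_i}}(S(i))$ for all $i\in P(c)$. Then each avatar $(i,k)$ carries all of $\max_{\le_i^k}(S(i))$, and any would-be blocker $(b;\vec k')$ is rejected at the avatar $(i,k_i')$ of the agent $i$ who rejects $b$ in $(I,C)$.

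\smallskip
A smaller point: your displayed formula for $f_i(S(i))$ in part~(a) is not right (the union should run over \emph{all} $k$, and each $\max_{\le_i^k}$ is taken over all of $S(i)$, not a per-avatar slice). The correct verification of \textbf{S0} for $S=\pi(S')$ is the analogue of Lemma~\ref{lemma1}(b): for $c\in S(i)$ with lift at avatar $k_i$, take the $\le_i^{k_i}$-maximum $e$ of $S(i)$ and, if its lift sits at a different avatar, reroute it through $(i,k_i)$; \textbf{S*} in $(I',C')$ forces that rerouted copy into $S'$, whence $c$ is $\le_i^{k_i}$-maximal in $S(i)$ and so $c\in f_i(S(i))$.
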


This assertion is based on a theorem in~\cite{AM} saying that any Plott function is representable as the union of several linear CFs. We construct the desired hypergraph $(I',C')$ by splitting each agent $i\in I$ into a set of its `subagents' $\widetilde i_j$, which already have weak orders as preferences on the contracts available to them. To simplify our description (and make the construction more transparent), in suffices to describe in detail only one step, consisting of a `splitting' operation for one agent, which is denoted as 0.

Let us assume that the CF $f_0$ of this agent is represented as the union of several `simpler'  Plott functions $f_1,\ldots,f_\ell$ (for example, linearly or weakly ordered ones).\footnote{Recall that the union $f_1\cup \cdots \cup f_\ell$ is given by the formula $(f_1\cup \cdots \cup f_\ell)(A)=f_1(A)\cup \cdots \cup f_\ell(A)$.} We even may assume that $\ell=2$, and accordingly `split' the agent 0 into two new agents $0_1$
and $0_2$, which are denoted simply as 1 and 2. Each of them has the same set $C(0)$ of contracts, but their preferences differ and are given by CFs $f_1$ and $f_2$, respectively. More formally, the new set of contracts $\widetilde C$ is arranged as follows:
                                              $$
      \widetilde      C:=(C-C(0))\sqcup  \widetilde C(1)\sqcup \widetilde  C(2),
                                                $$
where $\widetilde C(1)=C(0)\times\{1\}$, $\widetilde C(2)=C(0)\times\{2\}$ are two copies of $C(0)$.  When this is not confusing, we will identify each of $\widetilde C(1)$ and $\widetilde C(2)$ with $C(0)$. In other words, each contract $c$ involving agent 0 is duplicated, turning into two contracts, $c_1$ and $c_2$, concluded by the same agents except for 0 which is now replaced by 1 and 2, respectively. The mapping $\pi$ sends agents 1 and 2 to 0, and sends $c_1$ and $c_2$ to $c$.

We have already described the preferences of agents 1 and 2; namely, they are given by CFs $f_1$ and $f_2$. For the other agents (which will be usually denoted as $j$, $j'$, etc.), the contracts $c_1$ and $c_2$ are equivalent (they perceive them as contracts with agent 0). More formally, agent $j$ (considered as an element of the set $\widetilde I=(I-\{0\})\cup \{1,2\}$) chooses $\widetilde c$ from a menu $A\subseteq\widetilde C(j)$ if and only if $c=\pi(\widetilde c)$  is chosen from $\pi(A)$:
                                                      $$
      \widetilde      f_j(A)=A\cap \pi ^{-1}(f_j(\pi A)).
                                                        $$
In Appendix~C, we show that $\widetilde f_j$ is a Plott CF as well.

Note that even if the old CF $f_j$ were linear, the new CF $\widetilde f_j$ should be given by a weak order  in general, because the twins $c_1$ and $c_2$ for the agent $j$ are equivalent (indifferent). This is the reason why we are able to reduce the problem not to the linear case, but merely to the weakly ordered one.

So, we have described the new system $\widetilde C$ of contracts, and now we can formulate the first assertion; it will be proved in Appendix~A.

 \begin{prop} \label{prop1}
For $C$ and $\widetilde C$ as above, if $\widetilde S$ is a stable system in $\widetilde C$, then the system $S=\pi(\widetilde S)$ is stable in $C$.
 \end{prop}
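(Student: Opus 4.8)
The goal is to show that if $\widetilde S\subseteq\widetilde C$ satisfies the two stability conditions \textbf{S0} and \textbf{S*} with respect to the equipment $(\widetilde f_j)$, then $S=\pi(\widetilde S)$ satisfies them with respect to $(f_i)$. The plan is to verify the two conditions separately, treating the split agent $0$ (and its avatars $1,2$) and the untouched agents $j$ differently.

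First I would record the elementary relationship between $S(i)$ and $\widetilde S$. For an untouched agent $j$, since $c_1,c_2$ are mapped to $c$ and $j$ participates in $c_k$ exactly when it participated in $c$, we have $\pi(\widetilde S(j))=S(j)$; because $\widetilde S$ is itself a valid network one should also note that $\widetilde S$ cannot contain both $c_1$ and $c_2$ for the same $c$ (or, if it does, an extra argument is needed — I would check whether condition \textbf{S0} for agents $1$ and $2$ rules this out, since a contract $c_1\in\widetilde S(1)$ must be chosen by $f_1$ and similarly $c_2$ by $f_2$; most likely the cleaner route is to observe that $P(c_1)\cap P(c_2)$ among the untouched agents coincides with $P(c)\setminus\{0\}$, and then to use \textbf{S*} to derive a contradiction from having both twins present). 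For agent $0$, we have $S(0)=\pi(\widetilde S(1)\cup\widetilde S(2))$, and by definition of the union CF, $f_0=f_1\cup f_2$.

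Next, \textbf{S0} for $S$. For an untouched agent $j$: from $\widetilde f_j(\widetilde S(j))=\widetilde S(j)$ and the defining formula $\widetilde f_j(A)=A\cap\pi^{-1}(f_j(\pi A))$, applying $\pi$ gives $f_j(S(j))=S(j)$, using that $\pi$ restricted to $\widetilde S(j)$ is injective (granted the no-two-twins fact). For agent $0$: \textbf{S0} for agents $1$ and $2$ gives $f_1(\widetilde S(1))=\widetilde S(1)$ and $f_2(\widetilde S(2))=\widetilde S(2)$; I want $f_0(S(0))=S(0)$, i.e. $(f_1\cup f_2)(S(0))=S(0)$. The inclusion $\subseteq$ is automatic; for $\supseteq$, take $c\in S(0)$, say $c=\pi(c_1)$ with $c_1\in\widetilde S(1)$, and I need $c\in f_1(S(0))\cup f_2(S(0))$. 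Here I would use Heredity/path-independence: $c_1\in\widetilde S(1)=f_1(\widetilde S(1))$ and $\widetilde S(1)\subseteq S(0)$ (identifying copies), but $f_1$ may choose less from the larger menu $S(0)\supseteq\widetilde S(1)$, so this needs care — probably the right statement is that $\widetilde S(1)$ and $\widetilde S(2)$ together exhaust the relevant menu, i.e. $S(0)=\widetilde S(1)\cup\widetilde S(2)$ under the identification, whence $f_1(S(0))\supseteq$ (something); I anticipate invoking that for path-independent $f$, $x\in f(A)$ and $x\in B\subseteq A$ imply $x\in f(B)$, and the converse direction is more delicate. This is the step I expect to be the main obstacle: correctly relating the choices of $f_1$ on the genuine menu $C(0)$-slices versus on $\widetilde C(1)$, and likewise pinning down why \textbf{S*} at the split agents forces $\widetilde S(1)$ and $\widetilde S(2)$ to be "compatible" so that their images fit together into an $f_0$-fixed set.

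Finally, \textbf{S*} for $S$. Let $b\in C\setminus S$. If $0\notin P(b)$, then $b$ (viewed unchanged in $\widetilde C$) is not in $\widetilde S$, so by \textbf{S*} for $\widetilde S$ there is $j\in P(b)$ with $b\notin\widetilde f_j(\widetilde S(j)\cup b)$; unwinding the formula for $\widetilde f_j$ and applying $\pi$ yields $b\notin f_j(S(j)\cup b)$, as desired. If $0\in P(b)$, consider the twins $b_1,b_2\in\widetilde C$; since $b\notin S$, neither $b_1$ nor $b_2$ lies in $\widetilde S$, so \textbf{S*} applied to $b_1$ gives an agent in $P(b_1)$ rejecting it, and similarly for $b_2$. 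If the rejecting agent for $b_1$ is some untouched $j$, we transfer as above to get $b\notin f_j(S(j)\cup b)$ and are done. The remaining case is when the only rejector of $b_1$ is agent $1$ and the only rejector of $b_2$ is agent $2$: then $b\notin f_1(S(0)\cup b)$ and $b\notin f_2(S(0)\cup b)$ (modulo the same menu-identification care as above), hence $b\notin (f_1\cup f_2)(S(0)\cup b)=f_0(S(0)\cup b)$, and $0\in P(b)$ witnesses \textbf{S*}. I would close by double-checking the menu-identification lemma is applied consistently, and remark that the whole argument extends verbatim from $\ell=2$ to general $\ell$.
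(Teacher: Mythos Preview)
Your overall architecture is right, and your treatment of \textbf{S*} is essentially correct (and is the contrapositive of the paper's argument); the ``menu-identification care'' you flag there is in fact the easy direction: from $b\notin f_1(S(1)\cup b)$ you get $b\notin f_1(S(0)\cup b)$ immediately by Heredity, since $S(1)\cup b\subseteq S(0)\cup b$.

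There are two issues on the \textbf{S0} side. First, the twins worry is a red herring: $\widetilde S$ may well contain both $c_1$ and $c_2$, and you do not need injectivity of $\pi$ on $\widetilde S(j)$. From $\widetilde S(j)=\widetilde f_j(\widetilde S(j))=\widetilde S(j)\cap\pi^{-1}(f_j(S(j)))$ you get $\widetilde S(j)\subseteq\pi^{-1}(f_j(S(j)))$, hence $S(j)=\pi(\widetilde S(j))\subseteq f_j(S(j))$, and equality follows. No injectivity enters.

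Second, and more seriously, your plan for \textbf{S0} at agent $0$ correctly isolates the obstacle but does not resolve it. You need to pass from $c\in f_1(S(1))$ to $c\in f_1(S(0))$ with $S(1)\subseteq S(0)$, which is the \emph{anti}-Heredity direction and is false for general Plott $f_1$. The paper handles this with a dedicated lemma: one shows $f_1(S(0))\subseteq S(1)$ (whence Outcast gives $f_1(S(0))=f_1(S(1))$). The proof of this inclusion is where \textbf{S*} for $\widetilde S$ is genuinely used, and not only ``at the split agents'' as you guess: if some $s\in f_1(S(0))\setminus S(1)$, then $s\in S(2)$, so $s_2\in\widetilde S$; this forces $s\in S(j)=f_j(S(j))$ for every other participant $j$, and together with $s\in f_1(S(1)\cup s)$ (Heredity) one concludes that $s_1$ blocks $\widetilde S$, a contradiction. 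Without this argument your \textbf{S0} verification at $0$ does not go through, so this is the step you need to actually write out.
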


Let \textbf{St}($G$) denote the set of stable networks for the equipped  hypergraph $G$, and \textbf{St}($\widetilde G$) a similar set for $\widetilde G$. Then Proposition~\ref{prop1} determines the mapping
                       $$
      \pi :   \textbf{St}(\widetilde G) \to \textbf{St}(G)
                 $$
that translates any stable system $\widetilde S$ into $\pi (\widetilde S)$. We claim that this mapping is surjective, that is, for any stable system $S$ in $G$, there exists a stable system $\widetilde S$ in $\widetilde G$ such that $\pi (\widetilde S)=S$. Moreover, we will build such a system $\widetilde S$ canonically.\medskip

\emph{Construction of} $\widetilde S$. If a contract $s$ belongs to $S$ and does not contain agent 0, then $s$ is lifted in $\widetilde C$ in a natural way and included in $\widetilde S$. Therefore, we only have to explain how to form $\widetilde S(1)$ and $\widetilde S(2)$. We put $\widetilde S(1):=f_1(S(0))$ and $\widetilde S(2):=f_2(S(0))$ (identifying $\widetilde C(1)$ and $\widetilde C(2)$ with $C(0)$).

\begin{prop} \label{prop2}
The system $\widetilde S$ constructed as above is stable.
 \end{prop}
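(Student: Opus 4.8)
The plan is to verify directly that the constructed $\widetilde S$ satisfies the two stability axioms \textbf{S0} and \textbf{S*} in the hypergraph $\widetilde G$, exploiting throughout the path independence of the choice functions (hence their idempotence and the Heredity property), the decomposition $f_0=f_1\cup f_2$, and the fact that $S$ itself is stable.

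First I would settle \textbf{S0}. For the two new agents $1$ and $2$, whose choice functions are simply $f_1$ and $f_2$ on $C(0)\cong\widetilde C(1)\cong\widetilde C(2)$: since $\widetilde S(1)=f_1(S(0))$ and $f_1$ is Plottian, idempotence gives $f_1(\widetilde S(1))=f_1(f_1(S(0)))=f_1(S(0))=\widetilde S(1)$, and symmetrically for $2$. For an old agent $j$ (viewed now as an element of $\widetilde I$), the key observation is that $\pi(\widetilde S(j))=S(j)$: indeed, a contract $c\in S(j)$ containing agent $0$ lies in $S(0)=f_0(S(0))=f_1(S(0))\cup f_2(S(0))$ by \textbf{S0} for $S$, so at least one of its twins $c_1,c_2$ is kept in $\widetilde S(j)$, while the contracts of $S(j)$ not containing $0$ lift bijectively. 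Then, using the definition $\widetilde f_j(A)=A\cap\pi^{-1}(f_j(\pi A))$ together with \textbf{S0} for $S$, we get $\widetilde f_j(\widetilde S(j))=\widetilde S(j)\cap\pi^{-1}(f_j(S(j)))=\widetilde S(j)\cap\pi^{-1}(S(j))=\widetilde S(j)$.

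Next I would verify \textbf{S*}. Take $\widetilde b\notin\widetilde S$ and put $b:=\pi(\widetilde b)$. If $\widetilde b$ involves neither agent $1$ nor agent $2$ (i.e. $\widetilde b\in C-C(0)$), then $b\notin S$, so by \textbf{S*} for $S$ some $i\in P(b)$ (necessarily $i\ne0$) satisfies $b\notin f_i(S(i)\cup b)$; since $\pi(\widetilde S(i)\cup\widetilde b)=S(i)\cup b$, the formula for $\widetilde f_i$ gives $\widetilde b\notin\widetilde f_i(\widetilde S(i)\cup\widetilde b)$. Otherwise, by symmetry, let $\widetilde b=b_1$ with $0\in P(b)$; then $b\notin\widetilde S(1)=f_1(S(0))$ (under the identification). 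Split into two cases. If $b\notin f_1(S(0)\cup b)$, then agent $1$ witnesses the blocking, because by path independence $f_1(\widetilde S(1)\cup b_1)=f_1(f_1(S(0))\cup b)=f_1(S(0)\cup b)\not\ni b$. If instead $b\in f_1(S(0)\cup b)$, then $b\notin S(0)$ (for if $b\in S(0)$ we would have $f_1(S(0)\cup b)=f_1(S(0))=\widetilde S(1)\not\ni b$), hence $b\notin S$; applying \textbf{S*} for $S$ we get $i\in P(b)$ with $b\notin f_i(S(i)\cup b)$, and $i\ne0$ since $i=0$ would force $b\notin f_1(S(0)\cup b)\cup f_2(S(0)\cup b)$, contradicting $b\in f_1(S(0)\cup b)$; then, exactly as above, agent $i$ shows that $b_1$ does not block $\widetilde S$.

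The ingredients beyond elementary set manipulation — idempotence, Heredity, and the identity $\widetilde f_j(A)=A\cap\pi^{-1}(f_j(\pi A))$ — are available from Appendix~C, so I do not expect difficulty there. The one delicate point, where I anticipate the main work, is the case analysis for \textbf{S*} when $\widetilde b$ is a twin $b_1$ whose projection $b$ already belongs to $S(0)$ yet is discarded from $\widetilde S(1)=f_1(S(0))$: there one must use the decomposition $f_0=f_1\cup f_2$ carefully to exhibit a blocking witness, choosing between agent $1$ (via path independence) and, when $b\notin S$, an old agent inherited from the stability of $S$; keeping the bookkeeping of twins and of $\pi$-images straight is the crux.
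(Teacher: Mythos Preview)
Your proposal is correct and follows essentially the same route as the paper's proof: both verify \textbf{S0} via the identity $\pi(\widetilde S(j))=S(j)$ (the paper isolates this as Lemma~A.2) together with idempotence of $f_1,f_2$, and both handle \textbf{S*} by the same dichotomy on whether $b\in f_1(S(0)\cup b)$, using the path-independence identity $\widetilde f_1(\widetilde S(1)\cup b_1)=f_1(S(0)\cup b)$ (the paper's Lemma~A.3) and, in the second subcase, pushing the blocking down to $S$ via \textbf{S*}. The only cosmetic differences are that the paper phrases \textbf{S*} as a proof by contradiction and singles out the autarkic case $P(b)=\{0\}$, whereas your case split absorbs it automatically (since then $b\in f_1(S(0)\cup b)$ would force $b\in f_0(S(0)\cup b)$, contradicting \textbf{S*} for $S$); you might also make explicit the easy inclusion $\pi(\widetilde S(j))\subseteq S(j)$, which you use but only argue in the other direction.
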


 The proof is given in Appendix~A. \medskip

Now let us return to Theorem~\ref{theorem1}. A required mapping $\pi :G'\to G$ is constructed by iterating the above splitting construction. The induction is proceeded by the number of vertices $j$ in (the current hypergraph) $G$ for which CF $f_j$ is not weakly ordered. If there are no such vertices, we are done. So let 0 be a vertex of $G$ for which CF $f_0$ is not weakly ordered. Discarding unnecessary edges, we may assume that $f_0$ is not empty-valued. By Aizerman--Malishevski's theorem (see Appendix~C), $f_0$ is representable as $f_0=f_1\cup \cdots\cup f_\ell$ with all $f_1,\ldots,f_\ell$ implemented by  weak orders (and even by linear ones). Let $\widetilde G$ be the hypergraph constructed as above, but with splitting 0 not into two vertices, but into $\ell$ ones. The proofs given above can be extended in a natural way to this case as well. The CFs in the new vertices $1,\ldots,\ell$ are already weakly ordered. But what about the other vertices $j$? When $f_j$ was weakly ordered, the new $\widetilde f_j$ is again weakly ordered. Indeed (see Appendix~C), if $f_j$ was generated by a weak order $\le _j$ on $C(j)$, then $\widetilde f_j$ is generated by the weak order $\pi^*(\le _j)$ on $\widetilde{C}(j)$, where $\pi$ is the projection of $\widetilde C(j)$ on $C(j)$.  \hfill$\Box$\medskip

Thus, we obtain a reduction of a general case (with Plottian CFs) to the special case where the preferences of agents are given by weak orders. And now it is reasonable to analyze this special case.


\section{Meta-stable contract networks}

Stable networks need not exist even if the agent preferences are given by linear orders. This depends, to a large extent, on the structure of the hypergraph $(I,C)$. In the literature there are many papers containing results in this field, among those we can mention~\cite{AF, F, I, T}. Below we propose a new concept of meta-stable network, which, on the one hand, is close to the concept of stable network and, on the other hand, such networks `always' exist.

As before, we assume that the preferences of agents are given by non-empty-valued Plott functions $f_i$ on $C(i)$ for all $i\in I$. To introduce the meta-stability, we first need to specify the notion of domination.

Let $f$ be a non-empty-valued Plott function on an abstract set $X$. Let us say that an element $d\in X$ \emph{dominates} (in spirit of Scarf) a set $A\subseteq X$ if either $A$ is empty or there is $a\in A$ such that $a\notin f(\{a,d\})$. Intuitively, the dominance of an element $d$ means that $d$ is somehow `better' than the set $A$. In the case $A=\emptyset$, this means that any contract is better than nothing. In the case $A\ne\emptyset$, this means that there is at least one element of $A$ that is `worse' than $d$.  Roughly speaking, we estimate a subset $A$ according to the worst case scenario, that is, we focus on guaranteed results.\medskip

\noindent\textbf{Definition.} A contract set  $S\subseteq C$ is \emph{dominated} by a contract $d$ if for each participant $i$ of  $d$, this contract dominates (relative to $f_i$) the set  $S(i)$.\medskip

The presence of a dominating contract $d$ indicates some instability of the system $S$. All participants of $d$ are tempted to add $d$ to  $S$ (if $d\notin S$) and refuse the contracts worse than $d$. On the other hand, if $d$ does not dominate $S$, then at least one of its potential participants is not interested in concluding $d$.\medskip

\noindent\textbf{Definition.} A network of contracts $S$ is called \emph{meta-stable} if there is no contract dominating $S$.\medskip

\noindent\textbf{Remark.} Let $S$ be meta-stable, and a participant $i$ has an autarkic contract $a_i$. Then the set $S(i)$ is nonempty. Since if $S(i)=\emptyset$, then $a_i$ dominates $S$.
\medskip

The concept of meta-stability is a weakening of the stability one, as the following assertion shows.\medskip

\noindent\textbf{Proposition 4.1.} \emph{A stable contract  network is meta-stable.}\medskip

\begin{proof} Suppose that a contract $d$ dominates an individually rational network $S$. We show that $d$ blocks $S$. To see this, let $i$ be an arbitrary participant of $d$. We explain that $d\notin S$ and $d\in f_i(S(i)\cup d)$. This is obvious if $S(i)$ empty. So assume that $S(i)\ne\emptyset$ and let $s_i$ be a contract from $S(i)$ that is not selected from the pair $\{d,s_i\}$. Then, moreover, $s_i$ is not selected from the set $S(i)\cup d$, i.e., $s_i\notin f_i(S(i)\cup d)$. This is possible only if $d\in f_i(S(i)\cup d)$, for otherwise $f_i(S(i)\cup d)=f_i(S(i))$ (by Outcast property, see Appendix C). By \textbf{S0}, the last set  is equal to  $S(i)$ and contains $s_i$. So $s_i\in f_i(S(i)\cup d)$, contrary to the assumption.

Also $d\notin S$. Thus, $d$ blocks $S$, contrary to condition \textbf{S*}.
 \end{proof}

We see that the difference of meta-stability from stability consists in rejection of  individual rationality. That is, we admit the possibility that $S(i)$ consists not only of all `best' contracts, but may also include some `worse' ones. Why does the agent not refuse `bad' contracts? One justification (though not convincing enough) is as follows. It may happen that a contract $s$, which agent $i$ would like to get rid of, is unique for some other agent $j$, i.e.,  $S(j)=\{s\}$. If $s$ is removed from $S$, then the agent $j$ remains without contracts at all. This forces him to agree to some other contract $c$, which earlier was regarded by him as not good enough. But then another participant $k$ of the contract $c$ may refuse an earlier concluded contract, which now becomes uninteresting for $k$ due to the appearance of $c$. In a word, a cascade of contract renegotiations may begin with an unpredictable outcome for the `initiator' $i$.

Let us illustrate this situation with a simple example. There are three agents, say, 1,2,3, and six contracts, of which three are autarkic (of the form $\{i,i)\}$, where $i=1,2,3$) and the other three are 2-element ones, namely, $\{i,j\}$ for $i\ne j$. The preferences of agent $i$ are arranged as follows: $\{i,i\}<\{i,i-1\}<\{i,i+1\}$ (letting 0=3 and 4=1). That is, agent $i$ considers as most preferable for him to have the contract with agent $i+1$ (taken modulo~3), and as least preferable to stay alone. In this example there is no stable contract system. But there are meta-stable ones. One of them consists of two contracts $\{1,2\}$ and $\{2,3\}$. (It is easy to see that the only contract that could pretend to dominate is $\{3,1\}$, but it is not better for agent 1 than the contract \{1,2\}.) Here agent 2 enters into two contracts, and the contract with agent 1 is worse for him than the one with agent 3. Imagine that he breaks the contract with agent 1. Then agent 1, who does not want to remain alone, turns to agent 3. Agent 3 is glad to accept his offer, refusing the old contract with agent 2. As a result, agent 2 looses both contracts and is forced to get his autarkic contract, which is worse than what he had before.

Another, perhaps more interesting justification for the rejection of individual rationality, is as follows. Let us regard an agent not as an `individual', but rather as a `collective' of several subagents which are equivalent in terms of preferences. Some of them succeed to conclude more profitable contracts, while the others become less profitable ones. As in illustration, in the above example one can split agent 2 into two subagents $2_1$ and $2_3$, and consider the concluded contracts of the form $\{1,2_1\}$ and $\{2_3,3\}$.


\section{Existence of meta-stable contract systems}

We have seen that the meta-stability represents a weakened concept of stability. The main advantage of this concept is that a meta-stable contract system `always' exists. Here, by saying  `always', we assume that all choice functions $f_i$ that we deal with are non-empty-valued Plott ones.\medskip

\noindent\textbf{Theorem 5.1.} \emph{For choice functions as above, a meta-stable contract system does exist.}\medskip

The proof consists of two parts. In the first one, we prove the theorem in the special case when the preferences of all agents are given by linear orders. To do this, we use a certain general `theorem on compromise' whose formulation and proof are left to Appendix~B. In the second part, we reduce a general case to the linear one.\medskip

\emph{Linear case.} Here we assume that the preferences of each agent $i$ are given by a linear order $\le _i$ on $C(i)\subseteq C$. Add for each agent $i$ a `dummy' autarkic contract $a_i$, which is worse than the others elements of $C(i)$. Below we shall show that in this `extended' system with $\widehat{C}=C\cup\{\{a_i\}, i\in I\}$ there exists a meta-stable set $\widehat{S}$ with all $\widehat{S}(i)$ nonempty.

Relying on this, we assert that $S=\widehat{S}\cap C$ \emph{is meta-stable in the initial problem}.

Suppose, for a contradiction, that there is a contract $d\in C$ dominating $S$ and let $i$ be an arbitrary participant of $d$. We claim that $d$ dominates $\widehat{S}(i)$ for  $i$, thus  contradicting the meta-stability of $\widehat{S}$. Indeed, if $S(i)=\emptyset$, then $\widehat{S}(i)=\{a_i\}$, and since $a_i<_i d$, the contract $d$ dominates $\widehat{S}(i)$ for $i$. And if $S(i)\ne\emptyset$, then the element in $S(i)$ which is worse than $d$ will be worse than $d$ in $\widehat{S}(i)$ as well. So in all cases $d$ dominates $\widehat{S}(i)$. Since this is true for any participant of $d$, we obtain that $d$ dominates $\widehat{S}$, contrary to the supposition.

In light of the proof, we can additionally assume that each agent has an autarkic contract.

Let $u_i:C(i) \to \mathbb R$ denote a utility function on $C(i)$ representing the order $\le_i$. Using this, each contract $c\in C$ can be considered as a `partially defined' function (keeping the same notation $c$) on $I$, given by the rule
                                                                    $$
                                   c(i):=u_i(c).
                                                                      $$
The domain of this function is the set $P(c)\subseteq I$ of participants of $c$. By the theorem on compromise that we discuss in Appendix~B, there exists a `compromise' function $x$, already defined on the whole $I$, which possesses two properties:

1) any $c\in C$ is smaller than or equal to $x$ at some point $i$ in the domain $P(c)$ of $c$;

2) for any $i\in I$, there exists $c\in C(i)$ which is not smaller (strictly) than $x$ within the whole domain $P(c)$ of $c$.

When such a function $x$ is available, we define the set $S$ as
                                                         $$
           S:=\{c\in C \,\colon\, x\le c\;\; \mbox{within the domain of $c$}\}.
        $$
Note that, due to property 2), the sets $S(i)$ are nonempty for all $i\in I$.\medskip

\noindent\textbf{Claim.} \emph{The set of contracts $S$ is meta-stable.}\medskip

\begin{proof}
Suppose that some contract $d$ dominates  $S$. Since all sets $S(i)$ are nonempty, this means that for any $i\in P(d)$, there is a contract $s_i\in S(i)$ (regarded as a function) such that $s_i(i)<d(i)$. Since $s_i\in S$, we have $x(i)\le s_i(i)$; then $x(i)<d(i)$ for any $i$ from the domain of $d$. But this contradicts property 1).
 \end{proof}

\emph{Linearization.} Let $f$ be a choice function on a set $X$. Let us say that a linear order  $\le$ on $X$ \emph{respects} $f$ if for any nonempty $A\subseteq X$, the maximal (relative to $\le$) element in $A$ belongs to $f(A)$. If $f$ is a non-empty-valued Plott function, then there are `a lot of' linear orders respecting $f$. Here `a lot of' means that for any $a\in f(A)$, there is a  linear order $\le$ respecting $f$ such that $a$ is the largest element in $A$. For more information, see Appendix~C.

Recall that an element $d$ dominates a nonempty set $A$ (relative to a CF $f$) if $a\notin f(a,d)$ for some $a\in A$. The following assertion is easy.\medskip

\noindent\textbf{Lemma 5.2.} \emph{Let $f$ be a non-empty-valued Plott function on $X$, and let a linear order $\le$ respect $f$. If $d$ dominates $A\subseteq X$ relative to $f$, then $d$ dominates $A$ relative to $\le$.}\medskip

\begin{proof} If $d$ dominates $A$ relative to $f$, then there is $a\in A$ such that $a\notin f(a,d)$. But then $a < d$ for any order $\le$ respecting $f$.
\end{proof}

Let us go back to our problem with agents $I$ and Plott functions $f_i$. Consider a new problem (an equipment) in which each Plott function $f_i$ is replaced by a linear order $\le _i$ which respects $f_i$. In the `linearized' problem $(I,C,\{\le _i, i\in I\})$ there is a meta-stable contract system $S$. We assert that $S$ is meta-stable in the initial problem as well. To see this, we have to check that there are no dominating contracts. But if $d$ dominates $S$ relative to $f_i$ (where $i$ is in $P(d)$), then, by Lemma 5.2, $d$ dominates $S$ relative to $\le _i$, which contradicts to meta-stability of $S$ in the problem with linear orders.

This completes the proof of Theorem 5.1 (assuming validity of the theorem on compromise).


\section{Minimal meta-stable networks}

In this section we assume that each agent has an autarkic contract.

The concept of meta-stable networks is not rigid enough. Let $S$ be a meta-stable contract system, and $T\subseteq S$ some subsystem in it. If $T(i)$ is not empty for every agent $i$, then  the system $T$  is also meta-stable. Note that stable networks do not allow a similar possibility; namely, if $S$ is stable, then  any proper subsystem in it is already unstable. This justifies the following notion. \medskip

\noindent{\bf Definition.}  A meta-stable system is called {\em  minimal} if it is minimal by inclusion, that is, any proper subsystem in it is not meta-stable.\medskip

There is a simple criterion of the minimality. Let $S$ be a system of contracts. Let us say that agent $i$ is \emph{monogamous}\footnote{The term monogamous is appropriate if `$\gamma\alpha\mu o\zeta$' (marriage) is understood as a contract.} if $S(i)$ consists of a single contract. This situation is typical in classical marriage or roommate problems.

\begin{prop} \label{prop4}
A meta-stable system $S$ is minimal if and only if any contract $s$ of $S$ contains a monogamous participant.
\end{prop}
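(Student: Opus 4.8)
The plan is to reduce everything to the elementary remark recorded just before the Proposition: for a meta-stable $S$, a subsystem $T\subseteq S$ is meta-stable if and only if $T(i)\neq\emptyset$ for every agent $i$. (The ``only if'' uses the standing assumption of this section that every agent owns an autarkic contract, so by the Remark in Section~4 a meta-stable network has all parts nonempty.) With this equivalence in hand, ``$S$ is minimal'' simply means that no proper subsystem $T\subsetneq S$ has all its parts nonempty, and I would also note at the outset that it suffices to test subsystems of the special form $S\setminus\{s\}$ with $s\in S$: if some proper $T\subsetneq S$ is meta-stable, pick $s\in S\setminus T$; then $T\subseteq S\setminus\{s\}\subsetneq S$, so $S\setminus\{s\}$ also has all parts nonempty and is meta-stable.

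For the direction ``every contract of $S$ has a monogamous participant $\Rightarrow$ $S$ minimal'': take an arbitrary contract $s\in S$ and a monogamous participant $i\in P(s)$, so that $S(i)=\{s\}$. Then $(S\setminus\{s\})(i)=S(i)\setminus\{s\}=\emptyset$, hence $S\setminus\{s\}$ is not meta-stable; by the reduction in the previous paragraph no proper subsystem of $S$ is meta-stable, so $S$ is minimal.

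For the converse I would argue by contraposition: assume some $s\in S$ has \emph{no} monogamous participant, i.e.\ $|S(i)|\ge2$ for every $i\in P(s)$. Put $T:=S\setminus\{s\}$. For $i\notin P(s)$ one has $T(i)=S(i)\neq\emptyset$ (as $S$ is meta-stable), and for $i\in P(s)$ one has $T(i)=S(i)\setminus\{s\}\neq\emptyset$ since $|S(i)|\ge2$. Thus $T$ is a proper subsystem with all parts nonempty, hence meta-stable, so $S$ is not minimal.

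The argument is short, and the only points requiring care are bookkeeping ones: correctly invoking that meta-stability passes to any subsystem all of whose parts are nonempty, and that under the section's blanket hypothesis a meta-stable system has no empty part, so that ``not all parts nonempty'' is genuinely equivalent to ``not meta-stable''. I do not expect any substantial obstacle beyond keeping these two auxiliary facts straight.
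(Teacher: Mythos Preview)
Your proof is correct and follows the same approach as the paper: the paper's one-sentence proof simply records the key observation that removing a contract $s$ with no monogamous participant leaves a meta-stable system, and you have spelled out in full the details that the paper leaves implicit (the reduction to subsystems of the form $S\setminus\{s\}$, the role of the autarkic-contract hypothesis via the Remark in Section~4, and the explicit treatment of both directions).
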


\begin{proof} It follows from the obvious fact that if $s$ is a contract without monogamous participants, then the system $S-\{s\}$ is also meta-stable.
 \end{proof}

Similar reasonings show that any meta-stable contract system is a union of minimal meta-stable ones. Therefore, in principle, we can restrict ourselves by studying minimal meta-stable networks. Especially since such systems provide the largest guaranteed utility.\medskip

\emph{Linearization.} Due to the reduction theorem 3.1, we may assume that the preferences of agents are given by weak orders. Let $\le _i$ be a weak order of agent $i$ on the set $C(i)$, and let $\preceq _i$ be a linear order extending $\le _i$ on the same set. (Then $c <_i c'$ implies $c\prec _ic'$, that is, strict preferences preserve while equivalences are eliminated. In terms of utilities, the original utility functions are slightly perturbed.) So, preserving the hypergraph $(I,C)$, we strengthen the initial preferences of agents to get linear orders $\preceq$.

 \begin{prop} \label{prop5}
1) If $S$ is a meta-stable network with respect to linear orders  $\preceq$, then $S$ is meta-stable for the original weak orders $\le$ as well.

2) Conversely, if  $S$ is  a minimal meta-stable network for $\le$, then there are corresponding linear extensions $\preceq$ of $\le$ such that $S$ is meta-stable with respect to $\preceq $.
\end{prop}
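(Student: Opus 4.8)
The plan is to prove the two parts separately, since they go in opposite directions.

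For part 1), the observation is that domination relative to a CF $f$ depends only on the binary choices $f(\{a,d\})$, i.e.\ on which of $a,d$ is selected from the pair. Since $\preceq_i$ linearly extends the weak order $\le_i$, strict preferences are preserved: if $a<_i d$ then $a\prec_i d$. Now suppose $S$ is meta-stable with respect to the linear orders $\preceq$, and suppose for contradiction that some contract $d$ dominates $S$ relative to the weak orders $\le$. Then for every $i\in P(d)$ either $S(i)=\emptyset$ (impossible if autarkic contracts are present and $S$ is assumed nonempty on each agent — but in any case this case can be handled directly) or there is $s_i\in S(i)$ with $s_i\notin f_i(\{s_i,d\})$ in the $\le_i$-sense, i.e.\ $s_i<_i d$. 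Since $\le_i$ refines to $\prec_i$, we get $s_i\prec_i d$, so $d$ also dominates $S(i)$ in the $\preceq_i$-sense. As this holds for every participant of $d$, the contract $d$ dominates $S$ relative to $\preceq$, contradicting meta-stability there. Essentially this is the same mechanism as Lemma~5.2 (passage to a respecting linear order), only now applied to a weak order rather than to a Plott function, and the argument is one short paragraph.

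For part 2) the direction is reversed, so we cannot use a single fixed linearization; we must \emph{choose} $\preceq$ depending on $S$. Let $S$ be a minimal meta-stable network for the weak orders $\le$. By Proposition~\ref{prop4}, every contract $s\in S$ has a monogamous participant, i.e.\ some $i$ with $S(i)=\{s\}$. The idea is to build each linear extension $\preceq_i$ of $\le_i$ so that the contract(s) in $S(i)$ are placed \emph{as high as possible} within their $\le_i$-indifference class — in particular, above every other contract $\le_i$-equivalent to them. Concretely, for each agent $i$, among the contracts of $C(i)$ that are $\le_i$-equivalent to some element of $S(i)$, rank those lying in $S(i)$ above those not in $S(i)$; extend arbitrarily to a full linear order $\preceq_i$ refining $\le_i$. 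We then have to verify that $S$ is meta-stable relative to $\preceq$. Suppose not: some $d$ dominates $S$ relative to $\preceq$, so for each $i\in P(d)$ there is $s_i\in S(i)$ with $s_i\prec_i d$. We want to derive that $d$ already dominates $S$ relative to $\le$, contradicting meta-stability of $S$ for $\le$. The only obstruction is the case $s_i\equiv_i d$ (equivalent under $\le_i$ but $s_i\prec_i d$). Here we use minimality: pick a participant $j$ of $d$ that is monogamous with respect to $S$, so $S(j)=\{s_j\}$; if $s_j\prec_j d$ with $s_j\equiv_j d$, then $d$ is $\le_j$-equivalent to the unique contract of agent $j$ in $S$, and one argues that replacing $s_j$ by $d$ leaves a set still meta-stable and of the same size — contradicting minimality of $S$ (since $S-\{s_j\}\cup\{d\}\ne S$ but, being obtained by swapping a contract for an equivalent one, shares the relevant meta-stability property, so $S$ was not a genuinely tight system). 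Actually the cleaner route is: among all minimal meta-stable networks for $\le$, there are no ``strict'' dominations, and the equivalence-type dominations can be ruled out on a monogamous participant precisely because such a participant's single contract cannot be undercut by an equivalent one without producing another meta-stable system of equal cardinality, contradicting the chosen $\preceq$'s construction. So $s_i<_i d$ strictly for every $i\in P(d)$, meaning $d$ dominates $S$ for $\le$, the desired contradiction.

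The main obstacle is exactly this handling of $\le_i$-equivalent-but-$\preceq_i$-smaller contracts in part 2): one must exploit minimality (equivalently, the monogamous-participant criterion of Proposition~\ref{prop4}) to show that such a ``tie-break domination'' cannot occur on a monogamous participant of $d$, and hence — since $d$ must be dominated on \emph{every} participant, including a monogamous one if $d\in S$, and on at least one if $d\notin S$ — cannot occur at all. Everything else (part 1, and the construction of the linear extensions) is routine.
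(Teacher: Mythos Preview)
Your part 1) is correct and essentially identical to the paper's argument.

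For part 2), your construction of the linear extensions (putting contracts in $S(i)$ above contracts outside $S(i)$ within each $\le_i$-indifference class) is right and is morally the same as the paper's. But your verification has a genuine gap, and the attempted fixes are flawed.

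The missing step is: you never establish $d\in S$ before invoking the monogamous-participant criterion of Proposition~\ref{prop4}. That criterion only applies to contracts \emph{in} $S$; an arbitrary $d\in C$ need not have any monogamous participant. The correct deduction is immediate from your own construction: if for some $i\in P(d)$ we have $s_i\equiv_i d$ and $s_i\prec_i d$ with $s_i\in S(i)$, then (since within that indifference class you ranked $S$-contracts above non-$S$-contracts) necessarily $d\in S(i)\subseteq S$. This is exactly what the paper does: it first shows $c\in M_+(i)\subseteq S$ using the chosen perturbation, and only then applies minimality.

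Once $d\in S$, the contradiction is one line, and your ``swapping'' argument is both unnecessary and wrong. By minimality, $d$ has a monogamous participant $j$, so $S(j)=\{d\}$; then the required $s_j\in S(j)$ with $s_j\prec_j d$ forces $s_j=d$ and $d\prec_j d$, absurd. (Equivalently, as the paper phrases it: $\tilde u_i(d)>\tilde u_i(S)$ for all $i\in P(d)$ forces $|S(i)|\ge 2$ for every such $i$, contradicting the existence of a monogamous participant.) Your proposed route---replacing $s_j$ by $d$ to get another meta-stable set of the same cardinality---does not contradict minimality, since ``minimal'' here means minimal by inclusion, not unique of its size. The final sentence (``on at least one if $d\notin S$'') is not a valid case distinction and should be dropped.
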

 \begin{proof}
Assertion 1) is easy. We only need to check that $S$ is non-dominated for the weak orders $\le$. Suppose this is not so, that is, there is a contract $c$ such that $u_i(c)>u_i(S)$ for any participant $i$ of $c$. But then $\tilde u_i(c)>\tilde u_i(S)$ for all $i\in P(c)$, which contradicts the meta-stability of $S$ with respect to $\preceq$ (where $\tilde u$ stands for the utilities for $\preceq$).

To see 2), let $S$ be a meta-stable system for the weak orders $\le_i$. The `splitting of ties' of non-marginal contracts is not important in essence, so we can focus on marginal ties. Fix a participant $i$ and denote by $M(i)$ the set of contracts $c\in C(i)$ such that $u_i(c)=u_i(S)$. Such contracts are divided into two groups. The group of those belonging to $S$ is denoted by $M_+(i)$, and the rest by $M_-(i)$. The first group is certainly nonempty. Choose some contract $s_i$ in it, remain its utility unchanged, and slightly increase the utilities $\tilde u_i$ of the other members of  $M_+(i)$. Also slightly decrease the utilities of contracts in $M_-(i)$.

Doing so for all agents, we eventually obtain a system with linear orders $\preceq _i$ extending the original weak orders $\le_i$. We assert that this system  is meta-stable.

Indeed, let $c$ be an arbitrary contract, and suppose that for its all participants $i$, there holds $\tilde u_i(c)>\tilde u_i(S)$. Then $u_i(c)\ge u_i(S)$. Such inequalities cannot be strict for all $i$, in view of the meta-stability of $S$. So there is $i$ for which the equality $u_i(c)=u_i(S)$ is fulfilled; then $c\in M(i)$. If $c\in M_-(i)$, then its perturbed utility $\tilde u_i$ is slightly less than the utility of marginal contract $s_i$, contrary to the supposition $\tilde u_i(c)>\tilde u_i(S)$. Hence $c\in S$. Now, since $\tilde u_i(c)>\tilde u_i(S)$, the contract $c$ is not unique in $S(i)$, for all $i\in P(c)$. This contradicts the minimality of $S$.
 \end{proof}

We obtain that when dealing with minimal meta-stable networks, one may assume, w.l.o.g., that the preferences of all participants are given by linear orders. In other words, all contracts for any agent $i$ are comparable and non-equivalent. In this situation, we have a closer relationship between the stability and minimal meta-stability, as follows.
 \begin{prop} \label{prop6}
Suppose that the preferences of all agents are given by linear orders. If $S$ is a stable contract network, then $S$ is a minimal meta-stable one.
  \end{prop}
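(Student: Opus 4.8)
The plan is to combine two things already in hand: Proposition~4.1, which tells us that the stable network $S$ is meta-stable, and the minimality criterion of Proposition~\ref{prop4}, which says that a meta-stable network is minimal if and only if each of its contracts contains a monogamous participant. So the whole task reduces to showing that for every $s\in S$ there is a participant $i$ of $s$ with $S(i)=\{s\}$.

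The key step is to cash in the linearity hypothesis. When $f_i$ is generated by a linear order $\le_i$ on $C(i)$, the choice $f_i(A)$ is a singleton (the top element of $A$) for every nonempty menu $A$. Condition \textbf{S0} reads $f_i(S(i))=S(i)$; since the left-hand side is a singleton whenever $S(i)\ne\emptyset$, this forces $|S(i)|\le 1$ for every agent $i$. Moreover $S$ is stable, hence meta-stable by Proposition~4.1, and the Remark following the definition of meta-stability (together with the standing assumption of this section that each agent has an autarkic contract) gives $S(i)\ne\emptyset$ for all $i$. Thus $S(i)$ is a one-element set for every $i\in I$. (In fact the autarky assumption is dispensable for this conclusion: if $s\in S$ then $s\in S(i)$ for each participant $i$ of $s$, so $S(i)\ne\emptyset$ automatically.)

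Finishing is then immediate: take any $s\in S$ and any $i\in P(s)$; since $s\in S(i)$ and $S(i)$ is a singleton, $S(i)=\{s\}$, so $i$ is monogamous. Hence every contract of $S$ contains a monogamous participant (indeed, all of its participants are monogamous), and Proposition~\ref{prop4} tells us that the meta-stable network $S$ is minimal.

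I do not expect a real obstacle here: the only point that needs to be spelled out is that single-valuedness of $f_i$ on nonempty menus, together with \textbf{S0}, pins $S(i)$ down to at most one contract. Everything else is bookkeeping with Proposition~4.1 and Proposition~\ref{prop4}.
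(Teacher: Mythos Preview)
Your proof is correct and follows essentially the same line as the paper's: meta-stability comes from Proposition~4.1, and minimality follows via the monogamy criterion of Proposition~\ref{prop4}, using that linearity of $f_i$ together with \textbf{S0} forces each $S(i)$ to be a singleton. The paper's proof is terser (it simply asserts that ``each set $S(i)$ consists of a single contract''), while you spell out the \textbf{S0}-plus-single-valuedness reasoning and handle the nonemptiness of $S(i)$ explicitly; these are elaborations rather than a different approach.
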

\begin{proof} The meta-stability has already been established. It remains to check that any contract $s\in S$ is unique for some of its participants $i$. (Equivalently, any contract $s\in S$ has a monogamous participant.) This follows from the fact that (in the case of linear preferences) each set $S(i)$ consists of a single contract.      \end{proof}

Note that when the preferences are linear and the original network $C$ is binary (viz. $(I,C)$ is a  graph), a minimal meta-stable network is split into connected components that are either isolated vertices or have the shape of `dandelions':
 \medskip

\unitlength=.8mm
\special{em:linewidth 0.5pt}
\linethickness{0.5pt}
\begin{picture}(92,45)(-20,0)
\put(70.00,20.00){\circle*{2.83}}
\put(70.00,5.00){\circle{2.83}}
\put(50.00,30.00){\circle{2.83}}
\put(59.00,40.00){\circle{2.83}}
\put(70.00,42.00){\circle{2.83}}
\put(81.00,40.00){\circle{2.83}}
\put(90.00,30.00){\circle{2.83}}
\put(70.00,19.00){\vector(0,-1){13.00}}
\put(70.00,6.00){\vector(0,1){13.00}}
\put(51,29.00){\vector(2,-1){17.00}}
\put(60.00,39.00){\vector(1,-2){9.00}}
\put(70.00,40.00){\vector(0,-1){18.00}}
\put(80.00,39.00){\vector(-1,-2){9}}
\put(89.00,29.00){\vector(-2,-1){17.00}}
\end{picture}

\noindent Here there is a central agent (the dark vertex), and several monogamous agents associated with it. The marginal contract of the central agent is represented by a two-sided arrow. In a particular case, this structure degenerates into a single binary contract.

\section*{Appendixes}

\appendix

\section{Proofs of Propositions~\ref{prop1} and~\ref{prop2}}

To prove these propositions, we use the following lemma, where $S(1)=\pi (\widetilde S(1))$, $S(2)=\pi(\widetilde S(2))$, and $S(j)=\pi (\widetilde S(j))$ for $j\ne 0,1,2$.

\begin{lemma} \label{lemma1}
\begin{itemize}
\item[\rm(a)] $f_j(S(j))=S(j)$ for each agent $j$ different from 0.
\item[\rm(b)] $f_1(S(1))=f_1(S(0))$, and similarly $f_2(S(2))=f_2(S(0))$.
 \end{itemize}
\end{lemma}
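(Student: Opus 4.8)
The plan is to prove the two parts separately, deriving part~(a) from condition \textbf{S0} for $\widetilde S$ and part~(b) from condition \textbf{S*}, and in both cases exploiting the Heredity and Outcast properties of Plott functions recorded in Appendix~C (together with the fact, also from Appendix~C, that each $\widetilde f_j$ is Plottian). Throughout I identify $\widetilde C(1)$ and $\widetilde C(2)$ with $C(0)$, so that $\pi$ restricted to each copy is a bijection, $S(1)=\widetilde S(1)$ and $S(2)=\widetilde S(2)$ as subsets of $C(0)$, and $S(0)=S(1)\cup S(2)$.

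For part~(a) with $j\in\{1,2\}$ the claim is immediate: since $\widetilde f_1=f_1$ under the identification, \textbf{S0} for $\widetilde S$ gives $f_1(S(1))=\widetilde f_1(\widetilde S(1))=\widetilde S(1)=S(1)$, and likewise for $j=2$. For $j\neq 0,1,2$ I would start from $\widetilde f_j(\widetilde S(j))=\widetilde S(j)$, substitute the definition $\widetilde f_j(A)=A\cap\pi^{-1}(f_j(\pi A))$ to get $\widetilde S(j)\subseteq\pi^{-1}(f_j(S(j)))$, and apply $\pi$ to obtain $S(j)\subseteq f_j(S(j))$; since $f_j(S(j))\subseteq S(j)$ always holds, equality follows.

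Part~(b) is the crux. Using $f_1(S(1))=S(1)$ from part~(a) and the Outcast property, proving $f_1(S(0))=f_1(S(1))$ reduces to showing $f_1(S(1)\cup D)\subseteq S(1)$, where $D:=S(2)-S(1)$ (so that $S(1)\cup D=S(0)$). The key step — and the main obstacle — is the claim: for every $c\in D$ one has $c\notin f_1(S(1)\cup c)$. To prove it, note $c\notin S(1)$ means the copy $c_1$ lies outside $\widetilde S$, so by \textbf{S*} some participant $k$ of $c_1$ satisfies $c_1\notin\widetilde f_k(\widetilde S(k)\cup c_1)$. If $k\neq 1$, then $k\in P(c)-\{0\}$ is also a participant of the copy $c_2$, which lies in $\widetilde S$ since $c\in S(2)$; hence $c\in S(k)$, and unfolding $\widetilde f_k(\widetilde S(k)\cup c_1)=(\widetilde S(k)\cup c_1)\cap\pi^{-1}(f_k(S(k)\cup c))$ together with $f_k(S(k))=S(k)$ from part~(a) forces $c_1\in\widetilde f_k(\widetilde S(k)\cup c_1)$, a contradiction. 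Hence $k=1$, which, under the identification, says precisely $c\notin f_1(S(1)\cup c)$.

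Finally, from the claim and the Heredity property no element of $D$ can belong to $f_1(S(1)\cup D)$, so $f_1(S(1)\cup D)\subseteq S(1)$; Outcast then gives $f_1(S(0))=f_1(S(1)\cup D)=f_1(S(1))=S(1)$, and running the same argument with the roles of $1$ and $2$ interchanged (now with $D'=S(1)-S(2)$ and using $c_1\in\widetilde S$ in the exceptional case) yields $f_2(S(2))=f_2(S(0))$. I expect the only delicate points to be the bookkeeping with the identifications $\widetilde C(1)\cong\widetilde C(2)\cong C(0)$ and making sure the instance of \textbf{S*} is invoked for the copy $c_1$ (resp.\ $c_2$) that sits \emph{outside} $\widetilde S$, while the other copy — which does sit in $\widetilde S$ — is what rules out all blocking participants except the split one.
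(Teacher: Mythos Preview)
Your proposal is correct and follows essentially the same route as the paper's own proof. For part~(a) both arguments derive $S(j)\subseteq f_j(S(j))$ from \textbf{S0} for $\widetilde S$ via the definition of $\widetilde f_j$; for part~(b) both show $f_1(S(0))\subseteq S(1)$ by taking $c\in S(0)-S(1)$, noting that the copy $c_2$ already sits in $\widetilde S$ so every participant $j\neq 1$ selects $c_1$ from $\widetilde S(j)\cup c_1$, and then invoking \textbf{S*} (the paper runs this as a blocking contradiction, you as its contrapositive) to force agent~$1$ to reject $c_1$, after which Outcast finishes the job. Your treatment of the autarkic edge case is in fact a bit cleaner than the paper's, since for autarkic $c$ the case $k\neq 1$ simply does not arise.
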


\begin{proof}
(a) Let $s\in S(j)$. If all participants of the contract $s$ are different from 0, then $s=\pi (\widetilde s)$ for a single (actually, equal to $s$) contract $\widetilde s$ from $\widetilde S$. By condition~\textbf{S0}, $ \widetilde s\in\widetilde f_j(\widetilde S(j))$; then $s\in f_j(S(j))$. Therefore, we may assume that 0 is one of the participants of $s$. The contract $s$ is a projection of some $\widetilde s$ from $\widetilde S$, and 1 or 2 is a participant of $\widetilde s$. Let for definiteness $\widetilde s=s_1\in\widetilde S(1)$; then $s_1\in\widetilde S(j)$. By condition~\textbf{S0}, for $\widetilde S$ we have the equality $\widetilde f_j(\widetilde S(j))=\widetilde S(j)$. Thus, $s_1\in\widetilde f_j(\widetilde S(j))$. By definition of $\widetilde f_j$, this means that  $s=\pi(s_1)$ belongs to $f_j(\pi(S(j)))=f_j(S(j))$.

(b) Recall that $S(0)=S(1)\cup S(2)$. It is enough to show that $f_1(S(0))\subseteq S(1)$, since then Outcast property (defined in Appendix~C) gives the desired equality.

Suppose, for a contradiction, that some contract $s$ of $f_1(S(0))$ does not belong to $S(1)$. Since $s$ is selected from the larger set $S(0)$, it is also selected from the smaller set $S(1)\cup s$, yielding $s\in f_1(S(1)\cup s)$. Then $s_1\in \widetilde f_1(\widetilde S(1)\cup s_1)$. This shows that the contract $s_1$ is not autarkic. Since in this case $s_1\in \widetilde S(1)$ and $s\in S(1)$, contrary to the supposition.

Now let $j$ be another participant of the contract $s$ (or $s_1$). We assert that $s_1\in\widetilde f_j(\widetilde S(j)\cup s_1)$. To show this (see the definition of $\widetilde f_j$), we have to make sure that $\pi (s_1)=s$ belongs to $f_j(\pi (\widetilde S(j)\cup s_1))=f_j(S(j)\cup s)$. Note that $s_2\in \widetilde S(2)$; then $s_2\in \widetilde S(j)$ and $s\in S(j)$. So $S(j)\cup s=S(j)$, and $f_j(S(j)\cup s)$ is equal to $S(j)$ and contains $s$.

Thus, $s_1$ belongs to both $\widetilde f_1(\widetilde S(1)\cup s_1)$ and $\widetilde f_j(\widetilde S(j)\cup s_1)$ for any participant $j\ne 1$ of $s_1$. By condition~\textbf{S*}, we obtain $s_1\in \widetilde S(1)$ and $s\in S(1)$, yielding a contradiction.
 \end{proof}

\noindent\textbf{ Proof of Proposition~\ref{prop1}.} One has to verify properties \textbf{S0} and \textbf{S*} for the system $S$.

We first check \textbf{S0} for agent 0, that is, $f_0(S(0))=S(0)$. Let $s\in S(0)$; one may assume that $s\in S(1)$. Since $S(1)=f_1(S(1))$ (according to~\textbf{S0} for $\widetilde S$ at the vertex 1), $s$ belongs to $f_1(S(1))$, which is equal to $f_1(S(0))$ (by Lemma~\ref{lemma1}(b)), and therefore $s$ belongs to $f_0(S(0))$. For other agents $j$, the needed equality is established in Lemma~\ref{lemma1}(a).
\smallskip

Next we check~\textbf{S*}. Suppose that there is a blocking contract $b$ for $S$. That is, $b\notin S$, but $b\in f_i(S(i)\cup b)$ for any $i\in P(b)$. If $0\notin P(b)$, then $b$ also blocks $\widetilde S$. Therefore, we may assume that $0\in P(b)$.

In this case,  $b\in f_0(S(0)\cup b)$. This means that $b$ lies either in $f_1(S(0)\cup b)$ or in      $f_2(S(0)\cup b)$. Let $b\in f_1(S(0)\cup b)$. Due to Heredity property of CF $f_1$, we have  $b\in f_1(S(1)\cup b)$. Then $b_1\in \widetilde f_1(\widetilde S(1)\cup b)$, where $b_1=(b,1)$.

Now let us examine the inclusion $b\in f_j(S(j)\cup b)$, where $j\in P(b)-\{0\}$. By the definition of $\widetilde f_j$, we have $b_1\in\widetilde f_j(\widetilde S(j)\cup b_1)$ since $\pi(\widetilde S(j)\cup b_1)=S(j)\cup b$.

Finally, $b_1$ does not belong to $\widetilde S$ since $b\notin S$. Then the contract $b_1$ is blocking for $\widetilde S$, contrary to the stability of $\widetilde S$. \hfill$\Box$\medskip

\noindent\textbf{Proof of Proposition~\ref{prop2}.}
Recall how the `covering' $\widetilde S$ is arranged. If $s$ belongs to $S$ and does not contain 0 as a participant, then $s$ is lifted in $\widetilde C$ in a natural way and is included in $\widetilde S$. As to $\widetilde S(1)$ and $\widetilde S(2)$, they are defined as $\widetilde S(1):=f_1(S(0))$ and $\widetilde S(2):=f_2(S(0))$ (where we identify $\widetilde C(1)$ and $\widetilde C(2)$ with $C(0)$). Since $f_1(S(0))\cup f_2(S(0))=f_0(S(0)) =S(0)$ (by condition \textbf{S0}), we have
                                                                  $$
\pi (\widetilde S(1)\cup \widetilde S(2))=S(0).
               $$
Similar equalities hold for vertices $j$ different from 0. We need two additional lemmas.

 \begin{lemma} \label{lemma2}
Let a vertex $j$ be different from $0$. Then $\pi (\widetilde
S(j))=S(j)$.
 \end{lemma}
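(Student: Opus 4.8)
\textbf{Plan for the proof of Lemma~\ref{lemma2}.}
The goal is to show $\pi(\widetilde S(j))=S(j)$ for a vertex $j\ne 0$. The inclusion $\pi(\widetilde S(j))\subseteq S(j)$ is essentially immediate: by construction, $S(j)$ is defined to be $\pi(\widetilde S(j))$ for all $j\ne 0,1,2$, and for $j=1,2$ one has $S(j)=\pi(\widetilde S(j))$ as well (this is the definition of $S(1),S(2)$ in the statement of Lemma~\ref{lemma1}). So the only content is the reverse inclusion $S(j)\subseteq\pi(\widetilde S(j))$, and in fact — re-reading the setup — $S(j)$ \emph{is} $\pi(\widetilde S(j))$ by fiat for these $j$, so presumably the lemma is really about the vertices among $\{1,2\}$, or about reconciling the two ways $S(j)$ can be described once an agent $j\ne 0$ happens to participate in a contract whose $0$-twin lies in the other copy. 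I would therefore read the claim as: for $j\ne 0$, a contract $s\in C(j)$ lies in $S(j)=\pi(\widetilde S)(j)$ if and only if it is the projection of \emph{some} edge of $\widetilde S$ incident to $j$, and this projection is well-defined (single-valued) on $\widetilde S(j)$.

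First I would unwind the definition of $\widetilde S$: edges of $S$ not containing $0$ are lifted verbatim, so they contribute themselves to both $S(j)$ and $\widetilde S(j)$ and cause no trouble; the only edges requiring care are those $s\in S(0)$ with $j\in P(s)$. For such an $s$, its lift to $\widetilde C$ is one of the twins $s_1\in\widetilde C(1)$ or $s_2\in\widetilde C(2)$, and I must check that at least one of these twins actually lies in $\widetilde S$ and is incident to $j$. Here I would invoke the construction $\widetilde S(1)=f_1(S(0))$, $\widetilde S(2)=f_2(S(0))$ together with the identity $f_1(S(0))\cup f_2(S(0))=f_0(S(0))=S(0)$ (condition \textbf{S0} for $S$, which is being proved, or rather is hypothesised in the Proposition~\ref{prop2} context where $S$ is a given stable system). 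Thus every $s\in S(0)$ is selected by $f_1$ or by $f_2$, hence $s_1\in\widetilde S(1)$ or $s_2\in\widetilde S(2)$; since $j\in P(s)=P(s_1)=P(s_2)$, the corresponding twin lies in $\widetilde S(j)$ and projects onto $s$. This gives $S(j)\subseteq\pi(\widetilde S(j))$. The opposite inclusion is the definitional one noted above. Single-valuedness of $\pi$ on edges is automatic since $\pi$ is a hypergraph homomorphism sending each $\widetilde c$ to a unique $c$.

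The main obstacle, such as it is, is purely bookkeeping: keeping straight which notion of $S(j)$ is in play (the \emph{defined} one $\pi(\widetilde S(j))$ versus the \emph{choice-generated} one one would get from $f_j$), and making sure the argument does not secretly use condition \textbf{S*} or part~(b) of Lemma~\ref{lemma1} in a circular way — here we only need \textbf{S0} for the original $S$, which is a hypothesis of Proposition~\ref{prop2}, so no circularity arises. I would also remark that the case split ``$0\in P(s)$ or not'' exhausts all possibilities and that in the first case the twin chosen may depend on $s$ but not on $j$, which is exactly why the lemma holds uniformly for all $j\ne 0$. Once this lemma and its companion are in hand, they feed directly into verifying \textbf{S0} and \textbf{S*} for $\widetilde S$ in the remainder of the proof of Proposition~\ref{prop2}.
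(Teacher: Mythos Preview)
Your argument for the inclusion $S(j)\subseteq\pi(\widetilde S(j))$ is correct and is exactly what the paper does: split on whether $0\in P(s)$, and in the nontrivial case use $S(0)=f_0(S(0))=f_1(S(0))\cup f_2(S(0))$ (condition \textbf{S0} for the given stable $S$) to locate a twin $s_k\in\widetilde S$ incident to~$j$.

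The gap is in the other inclusion. You have conflated the two settings in the appendix. Lemma~\ref{lemma2} lives inside the proof of Proposition~\ref{prop2}, where $S$ is the \emph{given} stable system in $C$ and $\widetilde S$ is \emph{constructed} from it; here $S(j)$ is simply $\{s\in S:j\in P(s)\}$ and is \emph{not} defined to be $\pi(\widetilde S(j))$. The identities ``$S(j)=\pi(\widetilde S(j))$'' you invoke are the \emph{definitions} introduced just before Lemma~\ref{lemma1}, for the setting of Proposition~\ref{prop1}, where the roles of $S$ and $\widetilde S$ are reversed. So $\pi(\widetilde S(j))\subseteq S(j)$ is not definitional here and needs its own (short) argument. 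The paper supplies it: if $\widetilde s\in\widetilde S(j)$ has, say, $1\in P(\widetilde s)$, then $\widetilde s\in\widetilde S(1)$, so $\pi(\widetilde s)\in f_1(S(0))\subseteq S(0)\subseteq S$; and since $j$ (an original agent, hence $j\ne 1,2$) participates in $\widetilde s$, it also participates in $\pi(\widetilde s)$, giving $\pi(\widetilde s)\in S(j)$. The case where neither $1$ nor $2$ participates in $\widetilde s$ is immediate from the construction. This step is easy but not vacuous; without it your proposal is incomplete.

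A minor notational point: $P(s)$, $P(s_1)$, $P(s_2)$ are not literally equal (the first contains~$0$, the others contain $1$ or $2$ in its place); what you actually need, and what holds, is only that $j$ lies in each of them.
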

 \begin{proof}
Let $s\in S(j)$. We have to show that $s$ appears from $\widetilde S(j)$. This is immediate from the construction if  agent 0 does not participate in the contract $s$. So assume that $0\in P(s)$. Then $s\in S(0)$, and by condition \textbf{S0}, the contract $s$ is selected from $S(0)$ either by CF $f_1$ or by  CF $f_2$. Assume that $s\in f_1(S(0))$; then $s_1\in\widetilde S(1)$. Therefore, $s=\pi(s_1)$ belongs to $\pi(\widetilde S)$, whence $s\in\pi(\widetilde S(j))$.

Conversely, let $s=\pi (\widetilde s)$ for $\widetilde s\in \widetilde S(j)$, and let for definiteness $1\in P(\widetilde s)$. Since $\widetilde s\in\widetilde S(1)$, we have $s\in f_1(S(0))\subseteq S(0)$, which means that $s$ belongs to $S(j)$.
 \end{proof}
\begin{corollary} \label{coro1}
$\pi (\widetilde S)=S$.
 \end{corollary}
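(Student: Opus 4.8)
\textbf{Proof proposal for Corollary~\ref{coro1}.}

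The plan is to combine Lemma~\ref{lemma2} with the definition of $\widetilde S(1)$ and $\widetilde S(2)$, treating separately the contracts that do and do not involve the split agent $0$. Recall that $\pi$ maps $\widetilde C$ onto $C$ by sending each twin $c_1,c_2$ back to $c$ and leaving every contract that avoids $0$ untouched; so computing $\pi(\widetilde S)$ amounts to understanding how $\widetilde S$ meets the three pieces $C-C(0)$, $\widetilde C(1)$ and $\widetilde C(2)$ of $\widetilde C$.

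First I would note that for the contracts not involving $0$, i.e.\ those in $C-C(0)$, the construction of $\widetilde S$ lifts exactly the members of $S-S(0)$, and $\pi$ restricted to this part is the identity, so $\pi(\widetilde S)\cap(C-C(0))=S-S(0)=S\setminus S(0)$. Next, for the contracts involving agent $0$: by definition $\widetilde S(1)=f_1(S(0))$ and $\widetilde S(2)=f_2(S(0))$, and since $f_0=f_1\cup f_2$, condition \textbf{S0} for $S$ gives $f_1(S(0))\cup f_2(S(0))=f_0(S(0))=S(0)$. Identifying $\widetilde C(1)$ and $\widetilde C(2)$ with $C(0)$, we get $\pi(\widetilde S(1)\cup\widetilde S(2))=S(0)$, which is the displayed equality already recorded just before Lemma~\ref{lemma2}. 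Putting the two parts together yields $\pi(\widetilde S)=(S\setminus S(0))\cup S(0)=S$.

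Alternatively, and perhaps more cleanly, one can simply invoke Lemma~\ref{lemma2} directly: it asserts $\pi(\widetilde S(j))=S(j)$ for every vertex $j\ne 0$, and the remaining vertices $1,2$ are handled by the displayed identity $\pi(\widetilde S(1)\cup\widetilde S(2))=S(0)$. Since every contract of $C$ belongs to $C(j)$ for some agent $j$ (each contract has a nonempty participant set), and every contract of $\widetilde C$ likewise lies in some $\widetilde C(j)$, taking the union over all vertices gives $\pi(\widetilde S)=\bigcup_j \pi(\widetilde S(j))=\bigcup_j S(j)=S$. There is essentially no obstacle here: the corollary is a bookkeeping consequence of the lemma and the definition of $\widetilde S$, and the only mild care needed is to make sure the twin-contracts case is covered, which is exactly what the pre-Lemma~\ref{lemma2} display provides.
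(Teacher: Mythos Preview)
Your proposal is correct and matches the paper's intended argument: the corollary is stated in the paper without proof, immediately after Lemma~\ref{lemma2} and the displayed identity $\pi(\widetilde S(1)\cup\widetilde S(2))=S(0)$, precisely because it follows by taking the union over all participants as you describe. Both of your formulations (splitting contracts by whether they involve agent~$0$, or taking the union of $\pi(\widetilde S(j))$ over all vertices of $\widetilde G$) amount to the same bookkeeping, and no additional idea is needed.
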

 \begin{lemma} \label{lemma3}
Each $c\in C(0)$ satisfies $\widetilde f_1(\widetilde S(1)\cup c_1)=f_1(S(0)\cup c)$.
 \end{lemma}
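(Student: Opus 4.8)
The statement to establish is Lemma~A.7 (the one labeled \texttt{lemma3}): for each $c\in C(0)$ we must show the identity $\widetilde f_1(\widetilde S(1)\cup c_1)=f_1(S(0)\cup c)$, where $\widetilde S(1)=f_1(S(0))$ by the construction of $\widetilde S$, and $c_1=(c,1)$ denotes the lift of $c$ to the subagent $1$. The natural route is to unfold the definition of $\widetilde f_1$ (which here acts on the copy $\widetilde C(1)$ that we identify with $C(0)$, so that $\widetilde f_1$ is literally $f_1$ transported along $\pi$) and then invoke path-independence of $f_1$. Concretely, $\widetilde f_1(\widetilde S(1)\cup c_1)$ corresponds under the identification to $f_1\bigl(f_1(S(0))\cup c\bigr)$.

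\textbf{Key steps.} First I would record that under the identification of $\widetilde C(1)$ with $C(0)$ one has $\widetilde S(1)=f_1(S(0))$ and $c_1\leftrightarrow c$, so the left-hand side becomes $f_1\bigl(f_1(S(0))\cup\{c\}\bigr)$. Second, I would apply the path-independence functional equation $f(A\cup B)=f(f(A)\cup B)$ of the Plott function $f_1$ with $A:=S(0)$ and $B:=\{c\}$, which gives $f_1\bigl(f_1(S(0))\cup\{c\}\bigr)=f_1\bigl(S(0)\cup\{c\}\bigr)$. That is exactly the right-hand side (again modulo the harmless identification $\widetilde C(1)\equiv C(0)$), so the two sides coincide. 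If one wants to be scrupulous about the identification, one extra line notes that $\widetilde f_1$ was defined precisely to make $\pi$ restrict to an isomorphism of choice structures between $(\widetilde C(1),\widetilde f_1)$ and $(C(0),f_1)$, so the equality of sets on one side transports verbatim to the other.

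\textbf{Main obstacle.} There is essentially no analytic obstacle here: the whole content is a single application of the path-independence axiom, and the only thing requiring care is bookkeeping — keeping straight which objects live in $\widetilde C$ versus $C$ and using the identification $\widetilde C(1)\cong C(0)$ consistently, so that $c\in C(0)$ really does match the element $c_1$ being adjoined on the left. A secondary point worth a sentence is the edge case where $c\in S(0)$ already: then $c\in C(0)$ is no new element, $S(0)\cup\{c\}=S(0)$, and the claimed identity reduces to $\widetilde f_1(\widetilde S(1))=f_1(S(0))$, i.e.\ to $f_1(f_1(S(0)))=f_1(S(0))$, which is the idempotence of $f_1$ (a consequence of path-independence with $B=\emptyset$). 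So in all cases the lemma follows immediately, and it will be used in the sequel to verify conditions \textbf{S0} and \textbf{S*} for the lifted system $\widetilde S$ at the split vertices.
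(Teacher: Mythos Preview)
Your proposal is correct and follows exactly the paper's one-line argument: use $\widetilde S(1)=f_1(S(0))$ by construction, then apply the path-independence identity $f_1(f_1(S(0))\cup c)=f_1(S(0)\cup c)$. The extra care you take with the identification $\widetilde C(1)\cong C(0)$ and the edge case $c\in S(0)$ is harmless elaboration of the same idea.
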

 \begin{proof} $\widetilde S(1)=f_1(S(0))$. Therefore $\widetilde f_1(\widetilde S(1)\cup  c_1)=f_1(f_1(S(0)\cup  c)=f_1(S(0)\cup  c)$.
  \end{proof}

Now we are ready to finish the proof of Proposition~\ref{prop2}. One has to verify properties \textbf{S0} and \textbf{S*} for $\widetilde S$.
 \medskip

\noindent\emph{Verification of}~\textbf{S0.} We have to  show that $\widetilde f_i(\widetilde S(i))=\widetilde S(i)$ for any vertex $i$ of the hypergraph $\widetilde G$.

For vertex 1, we have $\widetilde S(1)=f_1(S(0))=f_1(f_1(S(0)))=f_1(S(1))=\widetilde      f_1(\widetilde S(1))$. Similarly for vertex 2.

Now let $j$ be different from 1 and 2. Let $\widetilde s\in\widetilde S(j)$. We have to show that $\widetilde s$ is selected (by CF $\widetilde f_j$) from $\widetilde S(j)$. This is obvious if none of 1 and 2 occurs among the participants of  $\widetilde s$. Assume that agent 1 participates in $\widetilde s$, that is, $\widetilde s=s_1$ for some edge $s$ of $f_1(S(0))\subseteq S(0)$. But then $s\in S(j)=f_j(S(j))$, by~\textbf{S*}. And since $s=\pi(s_1)$ is selected (by CF $f_j$) from $S(j)=\pi (\widetilde S(j))$ (see Lemma~\ref{lemma2}), we have $s_1\in \widetilde f_j(\widetilde S(j))$ (by the definition of $\widetilde{f}_j$).
 \medskip

\noindent\emph{Verification of}~\textbf{S*}. We show that there are no blocking contracts for $\widetilde S$. Suppose, for a contradiction, that  such a contract $\widetilde b$ exists. If neither 1 nor 2 occurs among the participants of $\widetilde b$, then its projection $b=\pi(\widetilde b)$ blocks $S$, which contradicts the stability of $S$. Therefore, we may assume that agent 1, say, participates in  $\widetilde b$.

Assume that 1 is the unique participant of $\widetilde b$ (that is, $\widetilde b$ is autarkic). Then $\widetilde b$ does not belong to $\widetilde S(1)$, and $b=\pi(\widetilde b)$ does not belongs to $f_1(S(0))$. If $b\in S(0)$, then $b\notin f_1(S(0)\cup b)=f_1(S(0))$. But due to Lemma~\ref{lemma3}, $f_1(S(0)\cup b)=\widetilde f_1(\widetilde S(1)\cup b_1)$. So $\widetilde b=b_1$ is not blocking, contrary to the supposition. Therefore, we may assume that $b\notin S(0)$. Since $S$ is stable, $b\notin f_0(S(0)\cup b)$ and, moreover, $b$ does not belong to  $f_1(S(0)\cup b)$. This contradicts the stability of $S$.

Now consider a participant $j$ in $P(b_1)$ different from 1. Since $b_1$ blocks $\widetilde S$, we have

           1) $b_1\notin \widetilde S(1)$;

           2) $b_1\in \widetilde f_1(\widetilde S(1)\cup b_1)$, and

           3) $b_1\in \widetilde f_j(\widetilde S(j)\cup b_1)$.

The first relation can be rewritten as $b\notin f_1(S(0))$.

The second relation can be rewritten as $b\in f_1(S(0)\cup b)$ (in view of Lemma~\ref{lemma3}). This together with the first relation implies that $b\notin S(0)$.

The third relation (with the definition of $\widetilde f_j$) gives   $b\in f_j(\pi (\widetilde S(j)\cup b))=f_j(S(j)\cup b)$ (by Lemma~\ref{lemma2}).

As a consequence, we obtain that $b$ blocks $S$, contrary to the stability of $S$. This completes the proof of Proposition 2. \hfill$\Box\Box$


 \section{Theorem on compromise}

As in Section~5, we will think of each contract $c$ as a partially defined real-valued function on the set of agents $I$. The definition domain $Dom(c)$ of this function coincides with $P(c)$, the set of participants of $c$, and the value at $i\in Dom(c)$ is defined to be $u_i(c)$. Thus, $C$ is a finite set of partially defined functions on $I$. We assume that for each $i$, the set $C$ contains an `autarkic' function defined only at the point $i$. \medskip

\noindent\textbf{Definition.} A function $x:I\to\mathbb R$ is called {\em compromise} for $C$ if the following two properties are satisfied:

1) no function $c$ in $C$ can be strictly greater than $x$ within its domain; in other words, for any $c\in C$, there exists $i\in Dom(c)$ such that $c(i)\le x(i)$;

2) for any participant $i\in I$, there exists $c\in C$ such that $i\in Dom(c)$ and $x\le c$ (within the domain of $c$).\medskip

The first property is something like a coalition rationality: the coalition $Dom(c)$ refuses the `distribution' $x$ if $c$ gives strictly more than $x$ to every participant of the coalition. In particular, $x$ is no worse than any autarkic contract. The second condition says that $x$ cannot be too large: the `payment' to any agent $i$ must be `justified' by its participation in some `good' contract (which gives at least $x(i)$ to all participants $i$ of the contract).
\begin{theorem} \label{theorem2p}
A compromise function $x$ does exist.
  \end{theorem}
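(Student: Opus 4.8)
\textbf{Proof plan for Theorem~\ref{theorem2p} (existence of a compromise function).}

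The plan is to reduce the statement to a fixed-point / combinatorial-topology lemma in the spirit of Scarf's lemma, which is precisely the type of tool the authors flag in the abstract. First I would normalize: since each agent $i$ has an autarkic function, the candidate values $x(i)$ may be assumed to lie in a bounded interval $[\,\underline m_i,\,\overline m_i\,]$, where $\underline m_i$ is the value of the autarkic contract at $i$ (property~1 forces $x(i)\ge$ this, as the autarkic $c$ has domain $\{i\}$) and $\overline m_i:=\max_{c\in C(i)}u_i(c)$ (if $x(i)>\overline m_i$, no contract through $i$ can satisfy property~2 at $i$). So $x$ ranges over a box $B=\prod_i[\underline m_i,\overline m_i]\subset\mathbb R^I$, a compact convex polytope.

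Next I would set up the two conditions as a covering of $B$. For each contract $c\in C$, let $U_c:=\{x\in B:\ x(i)<c(i)\ \text{for all }i\in Dom(c)\}$ be the open set of ``distributions blocked by $c$''; property~1 says exactly that $x\notin\bigcup_c U_c$, i.e.\ $x$ lies in the closed set $B\setminus\bigcup_c U_c$. For property~2, for each agent $i$ let $V_i:=\{x\in B:\ \exists c\in C(i)\ \text{with}\ x\le c\ \text{on }Dom(c)\}$; this is a finite union of closed ``corner'' regions, so $V_i$ is closed, and property~2 says $x\in\bigcap_i V_i$. Thus a compromise function is exactly a point of $\bigl(B\setminus\bigcup_c U_c\bigr)\cap\bigcap_i V_i$, and the task is to show this set is nonempty. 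The geometric picture is that lowering coordinates to escape the $U_c$'s tends to push $x$ down into the $V_i$'s, and the autarkic contracts provide the ``floor'' that keeps the two forces compatible.

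To make this rigorous I would invoke the Scarf-type lemma (a dual form of the KKM lemma, or Scarf's lemma on the core of a balanced game): consider the complementarity system where each ``row'' is an agent $i\in I$ and each ``column'' is a contract $c\in C$, with $c$ covering the rows in $P(c)$; one builds, from the utilities, a pair of a covering matrix and a basis-feasibility matrix so that a ``balanced'' subfamily of columns (contracts) produced by the lemma yields, after taking the coordinatewise value $x(i)=\min\{c(i):c\ \text{in the family},\ i\in P(c)\}$, a function satisfying both properties — property~1 because every contract fails to dominate some row it covers, property~2 because every row is covered by a column of the family achieving its value. I expect the main obstacle to be exactly this encoding step: verifying that the hypergraph incidence $P(\cdot)$ together with the numerical data $u_i(c)$ can be arranged into the matrices required by Scarf's lemma so that the lemma's ``balanced set'' translates back into a genuine compromise (in particular handling the strict-vs-nonstrict inequalities in properties~1 and~2, which is where the autarkic contracts and a small perturbation/limiting argument will likely be needed). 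An alternative, if one wishes to avoid Scarf, is a direct fixed-point argument: define a continuous (or upper-semicontinuous) correction map on $B$ that raises $x(i)$ when property~2 fails at $i$ and lowers some coordinate in $Dom(c)$ when some $c$ violates property~1, show it is an appropriate self-map with a fixed point by Kakutani, and check the fixed point is a compromise; but the combinatorial Scarf route is cleaner and matches the paper's stated toolkit, so that is the approach I would carry out.
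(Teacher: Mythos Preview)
Your plan is viable, but the paper inverts your preference order: it explicitly notes that one \emph{could} appeal to Scarf's lemma, then sets it aside in favour of a short direct Kakutani argument (so your ``alternative'' is in fact the paper's chosen route). The correspondence is simpler than your sketch suggests. On a big cube $X=[-N,N]^I$ (with $N$ larger than all $|u_i(c)|$) set $F(x)=\prod_i F_i(x)$ where $F_i(x)=\{N\}$ if some $c\in C(i)$ has $x<c$ on $Dom(c)$, $F_i(x)=\{-N\}$ if every $c\in C(i)$ is strictly below $x$ somewhere on $Dom(c)$, and $F_i(x)=[-N,N]$ otherwise. A fixed point $x\in F(x)$ is automatically a compromise: if property~1 failed via some $c$, then $F_i(x)=\{N\}$ for all $i\in Dom(c)$, forcing $x(i)=N\ge c(i)$, a contradiction; if property~2 failed at $i$, then $F_i(x)=\{-N\}$ and the autarkic $a_i$ would have $a_i(i)<x(i)=-N$, again impossible. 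Kakutani applies because each $F_i$ equals the full interval except on an open region where it is a constant endpoint, so the graph is closed. No encoding into balancedness matrices, no perturbation for strict-versus-weak inequalities, no limiting argument.

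One small slip in your Kakutani sketch: the push directions are reversed. When property~2 fails at $i$, $x$ is \emph{too large} to sit below any $c\in C(i)$, so $x(i)$ should be \emph{lowered}; when some $c$ violates property~1, $x$ is \emph{too small} on $Dom(c)$, so those coordinates should be \emph{raised}. The paper's three-case $F_i$ does exactly this. Your Scarf route would also succeed (and the reference to Aharoni--Fleiner in the bibliography is precisely such a treatment), but it buys combinatorial content at the cost of the encoding step you correctly flag as the main obstacle; the paper's fixed-point argument trades that for a two-paragraph proof.
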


To show this, one could appeal to Scarf lemma; yet we prefer to give a direct and concise proof inspired by~\cite{Dan99}. We will construct a correspondence $F$ (of the form $x\mapsto F(x)$) whose fixed points coincide with the compromises. The existence of fixed points will follow from Kakutani's theorem.\medskip

\noindent\emph{Construction of the correspondence} $F$. Take a `big cube' $X=[-N,N]^I$ in the space $\mathbb R^I$ (where $N$ is large compared with the maximal value of functions in $C$) and construct a convex-valued correspondence $F:X\Rightarrow X$. To do this, one needs to define the `image' $F(x)$ of any point $x\in X$. This set $F(x)$ is constructed as a parallelepiped of the form $\times (F_i(x)\colon i\in I)$, where $F_i(x)$ is a closed segment in $[-N,N]$. Moreover, $F_i(x)$ is assigned as either the whole segment $[-N,N]$ or one of its ends.

To do this, fix $i$ and consider the set $C(i)=\{c\in C\colon i\in Dom(c)\}$. We define
$$
F_i(x)=\left\{\begin{array}{ll}
\{N\}, & \text{if there exists $c\in C(i)$ such that $x<c$ on the domain of $c$.} \\
\{-N\}, & \text{if every function $c$ in $C(i)$ is strictly less than $x$ at some point $j$.} \\
\lbrack -N,N \rbrack & \text{otherwise.}
 \end{array} \right.
 $$

\begin{lemma} \label{lemma}
$x\in F(x)$ if and only if $x$ is a  compromise function for $C$.
 \end{lemma}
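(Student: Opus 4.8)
The plan is to verify the two directions of the equivalence by directly unwinding the definition of $F_i(x)$ against the two defining properties of a compromise function. The key observation is that the ``interesting'' value of $F_i(x)$ (i.e.\ $\{N\}$ or $\{-N\}$) is precisely what obstructs $x$ from being a fixed point, unless $x$ is already at the corresponding extreme of the big cube, which is impossible since $N$ was chosen strictly larger than all values taken by functions in $C$.

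First I would prove the forward direction. Assume $x\in F(x)$, i.e.\ $x_i\in F_i(x)$ for every $i\in I$. Fix $i$ and examine the three cases in the definition of $F_i(x)$. If $F_i(x)=\{N\}$, then $x_i=N$; but there is $c\in C(i)$ with $x<c$ on $Dom(c)$, so in particular $x_i<c(i)$, forcing $c(i)>N$, contradicting the choice of $N$. Hence this case does not occur, so for every $c\in C(i)$ there is some $j\in Dom(c)$ with $c(j)\le x(j)$ (negation of the $\{N\}$-clause) — wait, more carefully: the failure of the $\{N\}$-clause says ``there is no $c\in C(i)$ with $x<c$ on its whole domain'', which combined over all $i$ is exactly property~1) for every $c\in C$ (taking any $i\in Dom(c)$, the clause for that $i$ rules out $c$). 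Similarly, if $F_i(x)=\{-N\}$ then $x_i=-N$; but the autarkic function $a_i\in C(i)$ defined only at $i$ would have to satisfy $a_i(i)<x_i=-N$, again contradicting the choice of $N$. So this case does not occur either, which means the $\{-N\}$-clause fails for each $i$: there exists $c\in C(i)$ that is \emph{not} strictly less than $x$ at any point of $Dom(c)$, i.e.\ $x\le c$ on $Dom(c)$; this is exactly property~2) for agent $i$. Thus $x$ is a compromise.

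For the converse, suppose $x$ is a compromise. Fix $i$. Property~1) applied to every $c\in C(i)$ shows that the $\{N\}$-clause fails, so $F_i(x)\ne\{N\}$. Property~2) for agent $i$ produces $c\in C(i)$ with $x\le c$ on $Dom(c)$, so $c$ is not strictly less than $x$ at any point; hence the $\{-N\}$-clause fails, so $F_i(x)\ne\{-N\}$. Therefore $F_i(x)=[-N,N]\ni x_i$ (here I use that $x\in X$, so $x_i\in[-N,N]$). Since this holds for all $i$, we get $x\in F(x)$.

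I do not anticipate a serious obstacle here; the statement is essentially a bookkeeping exercise matching clauses to properties. The one point requiring care is the role of $N$: the argument that the extreme cases $F_i(x)=\{\pm N\}$ cannot coexist with $x$ being a fixed point relies on $N$ being strictly larger than every value $u_j(c)$, and (for the $\{-N\}$ side) on the presence of the autarkic contracts — this is exactly why those were assumed. After this lemma, the theorem itself follows immediately: $F$ is a correspondence from the nonempty compact convex set $X$ to itself with nonempty convex values (each $F(x)$ is a product of segments) and closed graph (the clauses defining $F_i$ are given by non-strict/strict inequalities, and one checks upper hemicontinuity directly), so Kakutani's fixed point theorem yields $x\in F(x)$, which by the lemma is a compromise function.
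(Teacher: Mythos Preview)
Your proof is correct and follows essentially the same route as the paper: both argue that if $x\in F(x)$ then the extreme cases $F_i(x)=\{N\}$ and $F_i(x)=\{-N\}$ are excluded by the choice of $N$ together with the autarkic contracts, which forces properties~1) and~2); for the converse, the paper simply declares it ``trivial'', whereas you spell out that properties~1) and~2) kill the $\{N\}$- and $\{-N\}$-clauses respectively, leaving $F_i(x)=[-N,N]\ni x_i$. Your write-up is in fact slightly cleaner than the paper's (which contains a small slip, writing ``$c(i)=N$'' where ``$c(i)>N$'' is meant).
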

\begin{proof}
Let us check property 1) in the compromise definition. Suppose this is not valid, namely, there is a function $c$ strictly greater than $x$ (within $Dom(c)$). Then for any $i\in Dom(c)$, we have $F_i(x)=\{N\}$ and  $x(i)=N$. But then $c(i)=N$, which contradicts the definition of $N$.

Next we check property 2). Suppose, for a contradiction, that there is $i$ such that every function $c$ in $C(i)$ is somewhere less than $x$ (that is, $c(j)<x(j)$ for some $j\in Dom(c)$). Then $F_i(x)=\{-N\}$ and $x(i)=-N$. In particular, an autarkic function $a_i$ is defined only at the set $\{i\}$ and, therefore, it is less that $x$, $a_i(i)<x(i)=-N$. This again contradicts the definition of $N$.
 \end{proof}

The existence of fixed points of $F$ follows from Kakutani's theorem. Indeed, the images of $F$ are convex and nonempty. Therefore, it suffices to show that the graphic of each correspondence $F_i$ is closed. This is a consequence of the fact that if $F_i(x)$ is $\{N\}$ or $\{-N\}$, then so is for the points $x'$ close to $x$ as well.

The converse assertion is trivial. \hfill$\Box$


\section{Plott choice functions}

In this section, we assume (for simplicity) that $X$ is a finite set. Recall that a CF on $X$ is a mapping $f:2^X\to 2^X$ such that $f(A)\subseteq A$ for any `menu' $A\subseteq X$. Such a CF is called a {\em Plott function} if the following equality holds for any menus $A$ and $B$:
      $$
                                f(A\cup B)=f(f(A)\cup B).
      $$
This immediately gives $f(A\cup B)=f(f(A)\cup f(B))$, as well as $f(f(A))=f(A)$.

Let us fix some Plott CF $f$. A subset $N\subseteq X$ is called {\em null} (or insignificant) if $f(N)=\emptyset$. It can be seen that the union $N^*$ of all null sets is a null set as well (the largest null set). Adding any null set to a menu does not change the choice. So, by removing $N^*$ from $X$, we may assume that $\emptyset$ is the only null set, that is, assume that $f$ is a `non-empty-valued' CF.

           Plott functions have two characteristic properties.\medskip

\noindent\emph{Heredity} (or substitutability): if $A\subseteq B$, then $f(B)\cap A\subseteq f(A)$. In other words, if $a\in A$ is chosen in a larger set $B$, then $a$ is chosen in $A$ as well.\medskip

\noindent\textbf{Corollary.} \emph{If $A\subseteq f(B)$, then $f(A)=A$.}\medskip

\noindent\emph{Outcast} (or independence from rejected alternatives, IRA): if $f(A)\subseteq B\subseteq A$, then $f(A)=f(B)$.\medskip

Conversely, it can be shown that holding Heredity and Outcast properties implies that the CF is Plottian.\medskip

It is easy to check that the union of (two or more) Plott functions is again Plott function. Aizerman and Malishevski~\cite{AM} showed that any (non-empty-valued) Plott function can be represented as the union of several linear CFs.

Let $f$ be a CF on a finite set $X$. Let's say that a linear order $\le$ on $X$ \emph{respects} $f$ if for any (nonempty) $A\subseteq X$ the largest (relative to $\le$) an element of $A$ belongs to $f(A)$. If such an order exists, $f$ is non-empty-valued. In the case of a non-empty-valued Plott       function, there are a lot of linear orders respecting $f$. This is a reformulation of the Aizerman-Malishevski theorem.

The next assertion has been encountered earlier. Let $\pi :X\to Y$ be a mapping of sets, and $g$ a CF on $Y$. Define the CF $f=\pi^*(g)$ on $X$ by the following formula:
                                                   $$
               f(A)=A\cap \pi ^{-1}(g(\pi (A)))\quad \mbox{for $A\subseteq X$}.
                                                     $$
In other words, $a$ is chosen from $A$ if $\pi(a)$ is chosen from $\pi(A)$. In particular, $\pi(f(A))=g(\pi (A))$.

 \begin{prop} \label{prop5}
The CF $f$ is Plottian if $g$ is Plottian.
  \end{prop}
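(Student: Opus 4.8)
The plan is to verify the two characteristic properties of Plott functions established earlier in this appendix, namely Heredity and Outcast, for $f=\pi^*(g)$, and then invoke the stated converse (that Heredity plus Outcast imply Plottian). This is cleaner than manipulating the functional equation $f(A\cup B)=f(f(A)\cup B)$ directly, because the defining formula $f(A)=A\cap\pi^{-1}(g(\pi(A)))$ interacts somewhat awkwardly with unions on the $X$-side (one has $\pi(A\cup B)=\pi(A)\cup\pi(B)$, which is fine, but re-expressing $f(f(A)\cup B)$ requires knowing $\pi(f(A)\cup B)$, and $\pi(f(A))=g(\pi(A))$ is not a union of the original pieces). Heredity and Outcast, by contrast, are monotonicity/idempotency statements that transfer transparently.

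First I would record the key identity $\pi(f(A))=g(\pi(A))$ for every $A\subseteq X$, which is immediate from the definition since $\pi(A\cap\pi^{-1}(B))=\pi(A)\cap B$ whenever $B\subseteq\pi(A)$ (apply with $B=g(\pi(A))\subseteq\pi(A)$). Next, for Heredity, suppose $A\subseteq B\subseteq X$ and $a\in f(B)\cap A$. Then $\pi(a)\in g(\pi(B))$ and $\pi(A)\subseteq\pi(B)$, so Heredity of $g$ gives $\pi(a)\in g(\pi(A))$; combined with $a\in A$ this yields $a\in A\cap\pi^{-1}(g(\pi(A)))=f(A)$, as required. For Outcast, suppose $f(A)\subseteq B\subseteq A$; I must show $f(A)=f(B)$. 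Applying $\pi$ and using the key identity, $g(\pi(A))=\pi(f(A))\subseteq\pi(B)\subseteq\pi(A)$, so Outcast of $g$ gives $g(\pi(A))=g(\pi(B))$. Hence $f(B)=B\cap\pi^{-1}(g(\pi(B)))=B\cap\pi^{-1}(g(\pi(A)))$. One inclusion is clear: $f(A)=A\cap\pi^{-1}(g(\pi(A)))\supseteq B\cap\pi^{-1}(g(\pi(A)))=f(B)$ is the wrong direction, so I would instead argue $f(A)\subseteq f(B)$ from $f(A)\subseteq B$ together with $f(A)\subseteq\pi^{-1}(g(\pi(A)))$, and $f(B)\subseteq f(A)$ from $f(B)\subseteq B\subseteq A$ plus $\pi(f(B))\subseteq g(\pi(B))=g(\pi(A))$, i.e. $f(B)\subseteq A\cap\pi^{-1}(g(\pi(A)))=f(A)$.

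Once Heredity and Outcast are in hand, the converse direction quoted in this appendix ("holding Heredity and Outcast properties implies that the CF is Plottian") immediately gives that $f$ is a Plott function, completing the proof. I do not anticipate a genuine obstacle here; the only point requiring a little care is the bookkeeping in the Outcast step (keeping straight which inclusions come from the $B\subseteq A$ side and which from the image side), and making sure the set-image identity $\pi(A\cap\pi^{-1}(B))=\pi(A)\cap B$ is only used when $B\subseteq\pi(A)$, which is exactly the case we need.
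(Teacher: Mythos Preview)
Your proposal is correct and follows essentially the same route as the paper: verify Heredity and Outcast for $f=\pi^*(g)$ and invoke the converse characterization. The Heredity argument is identical; for Outcast the paper (with the roles of $A$ and $B$ swapped) observes that one inclusion already follows from the just-established Heredity of $f$ and proves only the remaining one, whereas you derive both inclusions directly from $g(\pi(A))=g(\pi(B))$ --- a cosmetic difference, not a different method. (Incidentally, the inclusion you label ``the wrong direction'' is in fact one of the two you need, so that remark is misleading even though your subsequent argument for both inclusions is fine.)
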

\begin{proof} Let us check that $f$ satisfies Heredity and Outcast.

To see Heredity, let $A\subseteq B$, $a\in A$ and $a\in f(B)$. Then $\pi(a)\in g(\pi (B))$. Since $\pi (A)\subseteq \pi (B)$, by Heredity for $g$, we get $\pi(a)\in g(\pi (A))$, that is, $a\in f(A)$.

To see Outcast, it suffices to show that if $f(B)\subseteq A\subseteq B$, then $f(A)\subseteq f(B)$ (the converse inclusion follows from Heredity). Let $a\in f(A)$; then $\pi(a)\in g(\pi (A))$. Applying $\pi $ to the inclusions $f(B)\subseteq A\subseteq B$, we get $g(\pi (B))=\pi (f(B))\subseteq \pi (A)\subseteq \pi (B)$. From Outcast for $g$, we get $g(\pi (A))\subseteq g(\pi (B))$. Then $\pi(a)\in g(\pi(B) )$, implying $a\in f(B)$.
 \end{proof}

Obviously, for $\pi :X\to Y$ as above, if CF $g$ is given by a weak order $\le _Y$, then $f=\pi^\ast(g)$ is given by the weak order $\le _X=\pi^{-1}(\le _Y)$.
\medskip

\noindent\textbf{Acknowledgments.} We thank the anonymous reviewers for meticulously reading the original version of this paper and many useful suggestions.

      \end{document}